\documentclass{article}
\usepackage[utf8]{inputenc}
\usepackage[T1]{fontenc}
\usepackage[english]{babel}
\usepackage[autostyle=true]{csquotes}
\usepackage{mathdots}
\usepackage{latexsym, amssymb, amsmath}
\usepackage{amsthm}
\makeatletter
\renewcommand*\env@matrix[1][*\c@MaxMatrixCols c]{%
	\hskip -\arraycolsep
	\let\@ifnextchar\new@ifnextchar
	\array{#1}}
\usepackage{url}
\usepackage{float}
\usepackage{caption}
\usepackage{subcaption}
\usepackage{color}   
\usepackage{hyperref}
\usepackage[symbol]{footmisc}
\hypersetup{
	colorlinks=true, 
	linkcolor=black,
	filecolor=magenta,      
	urlcolor=cyan,
	linktoc=all,     
}
\usepackage[ruled,vlined]{algorithm2e}
\usepackage{graphicx}
\graphicspath{ {./images/} }
\usepackage{amscd} 
\usepackage{tikz-cd}
\usepackage[all,cmtip]{xy}
\usepackage{graphicx}

\newcommand{\F}{\mathbb{F}}

\newtheorem{thm}{Theorem}[section]
\newtheorem{lemma}[thm]{Lemma}
\newtheorem{corollary}[thm]{Corollary}
\newtheorem{prop}[thm]{Proposition}

\theoremstyle{definition}
\newtheorem{definition}[thm]{Definition}

\newtheorem{ex}{Example}[thm]

\makeatletter
\def\blfootnote{\xdef\@thefnmark{}\@footnotetext}
\makeatother

\date{}

\usepackage[%
backend=bibtex,%
style=numeric-comp,%
sorting=nyt,%
hyperref,%
]{biblatex}

\AtBeginBibliography{}

\renewbibmacro{in:}{%
	\ifentrytype{article}
	{}
	{\bibstring{in}%
		\printunit{\intitlepunct}}}

\usepackage[%
backend=bibtex,%
style=numeric-comp,%
sorting=nyt,%
hyperref,%
]{biblatex}
\addbibresource{references.bib}

\DeclareUnicodeCharacter{2212}{-}
\begin{document}
	\sloppy
	
	\title{Biderivations of complete Lie algebras\blfootnote{Keywords: Biderivation, complete Lie algebra, linear commuting map} \blfootnote{\textit{\textup{2020} Mathematics Subject Classification}: 17B40, 17B05, 17B20} 
	\blfootnote{The authors are supported by University of Palermo (FFR2023, UNIPA VQR24) and by the “National Group for Algebraic and Geometric Structures, and their Applications” (GNSAGA – INdAM).}} \maketitle
	\noindent
	{{Alfonso Di Bartolo}, {Gianmarco La Rosa} \\ \\
		\footnotesize{Dipartimento di Matematica e Informatica}\\
		\footnotesize{Universit\`a degli Studi di Palermo, Via Archirafi 34, 90123 Palermo, Italy}\\
		\footnotesize{alfonso.dibartolo@unipa.it}, ORCID: 0000-0001-5619-2644 \\
		\footnotesize{gianmarco.larosa@unipa.it}, ORCID: 0000-0003-1047-5993 \\

\begin{abstract}
	The authors of this article intend to present some results obtained in the study of biderivations of complete Lie algebras. Firstly they present a matricial approach to do this, which was a useful and explanatory tool not only in the study of biderivations but also in the synthesis of these results. Then they study all biderivations of a Lie algebra $L$ with $\operatorname{Z}(L)=0$ and $\operatorname{Der(L)}=\operatorname{ad(L)}$, called \emph{complete}. Moreover, as an application of the previous result, they describe all biderivations of a semisimple Lie algebra (that are complete), extending a result obtained by X.\ Tang in (\cite{tang2018}) that describes all biderivations of a complex simple Lie algebra. And thirdly, results on symmetric and skew-symmetric biderivations are also presented.
\end{abstract}

\section*{Introduction}

The first notion of biderivation dates back to 1980 when Gy.\ Maksa studied in \cite{maksa1980remark} symmetric biadditive maps with non negative diagonalization in a different context from our (Hilbert spaces), that have been revealed to be biderivations. From the 1980s the study of biderivations on rings and algebras has had a great increase \cite{Vukman1989SymmetricBO}. Moreover, as can be seen from the contributions already mentioned and those to be mentioned below, biderivations and their applications has always been linked to commuting, centralizing and relating mappings. The definition of biderivation for a Lie algebra was given in 2011 by D.\ Wang, X.\ Yu and Z.\ Chen in \cite{wangyuchen}.  A considerable number of articles appeared in the literature since then, where biderivations of Lie algebras have been studied (see \cite{wangyu2013}, \cite{hanwangxia2016}, \cite{chen2016}, \cite{changchen2019},\cite{changchenzhou2019},  \cite{changchenzhou2021} to name a few). In this paper we prove some results related to biderivations of Lie algebras in terms of matrices associated to these particular bilinear maps. This approach offers an opportunity to deepen certain aspects of this topic and use linear algebraic tools to study and expose the results obtained.\\
In the first section the reader will find some basic definitions, the definition of biderivation (in a wider context than Lie algebras) and a collection of some results about them. In the next section, we will introduce the matrix approach we will use to study biderivations on Lie algebras. In section 3 we will prove the main result of the paper which describe all biderivations of complete Lie algebras (in particular, if $L$ is semisimple). It also extends a well known result on simple Lie algebra obtained by X.\ Tang in 2018 (\cite{tang2018}). In conclusion, in the last section, we will show some results about symmetric and skew-symmetric biderivations. 

\section{Preliminaries}
Let $A$ be an associative algebra over the commutative ring $R$. A \emph{derivation} $d\colon A\rightarrow A$ is a linear map that satisfies the \emph{Leibniz identity} 
\[d(xy)=d(x) y+xd(y),\] 
for every $x,y\in A$. A similar definition can be given for a Lie algebra. Denote by $L$ a Lie algebra over a field $\F$. An example of derivation on $L$ is the linear map $\operatorname{ad}_x\colon L\rightarrow L$, with $x\in L$, that maps every $y\in L$ in $\left[x,y\right]$. These derivations are called \emph{inner}. In order to generalize the definition of derivation on Lie algebras, some researchers gave the definitions of \emph{generalized derivation}, \emph{quasiderivation}, \emph{near derivation}, etc.\ and a lot of them studied this maps in several different cases (see for example \cite{LEGER2000165} and \cite{BRESAR20083765}). Another way to generalize the definition of derivation is to considerate bilinear map instead of a linear map and require that this bilinear map is a derivation in each of its arguments. More precisely, refering to the article \cite{Brear1995OnGB} of M.\ Brešar, W.\ Martindale and C.\ Miers, a bilinear map $f\colon A\times A\rightarrow A$ is called a \emph{biderivation} of $A$ if 
\begin{align*}
	& f(xy,z)=xf(y,z)+f(x,z)y
	\\
	& f(x,yz)=f(x,y)z+yf(x,z),
\end{align*}
for all $x,y,z\in A$. Suppose now that $A$ is a noncommutative algebra and let $\left[x,y\right]=xy-yx$ be the Lie product of the elements $x,y\in A$. Then, for all $x,y\in A$ and $\lambda \in \operatorname{Z}(A)$ (the center of $A$), the map 
\[f(x,y)=\lambda\left[x,y\right]\]
is the main example of biderivation on $A$. The biderivations of this form are called \emph{inner biderivations}. In \cite{BRESAR1993342} it was proved that all biderivations on noncommutative prime rings of this type. D.\ Benkonvič in \cite{BENKOVIC20091587} proved furthermore that, under certain condition, all biderivations on triangular algebra is a sum of an extremal and an inner biderivation. This result extends that obtained by J.\ Zhang et al. in \cite{ZHANG2006225}, which stated that biderivations of nest algebras are usually inner (they showed by some examples that, in some spacial cases, there exist non-inner biderivations). Biderivations have many applications to other field (see \cite{Bresar2004centralmapsurevy} for more details). Motivating by this D.\ Wang, X.\ Zu and Z.\ Chen gave in \cite{wangyuchen} the definition of biderivation on Lie algebras to study these latter on parabolic subalgebras of Lie algebras. 

\begin{definition}\cite{wangyuchen}
	Let $L$ be a Lie algebra over a field $\F$. A bilinear map $B\colon L\times L\rightarrow L$ is called \emph{biderivation} if it satisfies 
	\begin{align}
		& B(\left[x,y\right],z)=\left[x,B(y,z)\right]+\left[B(x,z),y\right]
		\label{cond1}\\
		& B(x,\left[y,z\right])=\left[B(x,y),z\right]+\left[y,B(x,z)\right]
		\label{cond2},
	\end{align}
	for all $x,y,z\in L$.
\end{definition}

An equivalent way to define a biderivation of a Lie algebra it is the following. We note the same could be done for other algebraic structures.

\begin{definition}\label{altdefbid}
	Let $L$ be a Lie algebra over a field $\F$. A bilinear map $B\colon L\times L\rightarrow L$ is called \emph{biderivation} if the maps
	$B(x,-)$ and $B(-,y)$ are derivations of $L$, for all $x,y\in L$.
\end{definition}


\section{A matricial approach for the study of biderivations of Lie algebras}

To make the calculations more clear we use the following matricial approach. Let $L$ be a Lie algebra of dimension $n$ over a field $\F$ and let $\left\{e_1,\ldots,e_n\right\}$ be a basis of $L$. Consider now the vector space product $\operatorname{M}_n(\F)^n$ of the $n$-tuple of matrices in $\operatorname{M}_n(\F)$. A biderivation $B\colon L\times L\rightarrow L$ can be thought as an element of $\operatorname{M}_n(\F)^n$. In general, a biderivation $B$ can be written as
$$
B(x,y)=\beta_1(x,y)e_1+\cdots+\beta_n(x,y)e_n,
$$
where $\beta_1,\ldots,\beta_n\colon L\times L\rightarrow\F$ are bilinear forms. Let $B_i$ be the matrix associated with the bilinear form $\beta_i$, for every $i=1,\ldots, n$.  We denote with $\operatorname{BiDer(L)}$ the set of all biderivations of the Lie algebra $L$. Now we are ready to define the following map 
\begin{align*}
	F\colon \operatorname{BiDer(L)}&\rightarrow M_n(\F)^n \\
	B&\mapsto \left(B_1,\ldots,B_n\right).
\end{align*}
We denote with $b^k_{ij}$ the $(i,j)$-th entry of the matrix $B_k$, i.e.\ the scalar $\beta_k(e_i,e_j)$, with $i,j,k=1,\ldots,n$.\\ Let $U,V$ and $W$ be vector spaces over $\F$ and let $\operatorname{B}(U,V; W)$ be the set of all bilinear maps from $U\times V$ to $W$. By definition, we have that $\operatorname{BiDer(L)}$ is a subset of $\operatorname{B}(L,L; L)$. Now, since $\operatorname{B}(U,V; W)$ is a vector space (see Chapter 5 in \cite{schaefer1971topological}), it is natural wonder whether $\operatorname{BiDer(L)}$ is a subspace of $\operatorname{B}(L,L; L)$ or whether it is just a subset.

\begin{prop}\label{biderisvs}
	Let $L$ be a Lie algebra over a field $\F$. The set $\operatorname{BiDer(L)}$ is a subspace of $\operatorname{B}(L,L; L)$.
\end{prop}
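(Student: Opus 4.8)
The plan is to verify the two defining conditions of a subspace: that $\operatorname{BiDer(L)}$ is nonempty and closed under arbitrary linear combinations. Nonemptiness is immediate, since the zero bilinear map $B=0$ satisfies both \eqref{cond1} and \eqref{cond2} trivially (every term vanishes), so $0\in\operatorname{BiDer(L)}$. It then remains only to check closure under linear combinations, which, together with nonemptiness, is equivalent to being a subspace of $\operatorname{B}(L,L;L)$.

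For closure I would take two biderivations $B,B'\in\operatorname{BiDer(L)}$ and scalars $\lambda,\mu\in\F$, and show that $\lambda B+\mu B'$ again satisfies \eqref{cond1} and \eqref{cond2}. The key observation, which makes this routine rather than delicate, is that each defining identity is \emph{linear} in the biderivation: the left-hand side $B(\left[x,y\right],z)$ is linear in $B$ by the vector space structure on $\operatorname{B}(L,L;L)$, and the right-hand side $\left[x,B(y,z)\right]+\left[B(x,z),y\right]$ is linear in $B$ because the Lie bracket is bilinear. Hence, expanding $(\lambda B+\mu B')(\left[x,y\right],z)$, the scalars $\lambda,\mu$ factor out of each bracket and one recovers exactly $\left[x,(\lambda B+\mu B')(y,z)\right]+\left[(\lambda B+\mu B')(x,z),y\right]$; the same computation, mutatis mutandis, settles \eqref{cond2}.

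Alternatively, one could phrase the whole argument through Definition~\ref{altdefbid}: since $(\lambda B+\mu B')(x,-)=\lambda B(x,-)+\mu B'(x,-)$ and a linear combination of derivations is again a derivation (as $\operatorname{Der}(L)$ is a subspace of $\operatorname{End}(L)$), both partial maps of $\lambda B+\mu B'$ are derivations, whence $\lambda B+\mu B'$ is a biderivation. Either route is entirely elementary, and I do not expect a genuine obstacle: the content is simply that both characterizations of a biderivation are given by homogeneous linear conditions in $B$, and the solution set of such conditions inside a vector space is automatically a subspace.
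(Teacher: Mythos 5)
Your proof is correct and takes essentially the same approach as the paper: both verify that the defining identities \eqref{cond1} and \eqref{cond2} are preserved under linear combinations because each side is linear in $B$ (the paper checks sums and scalar multiples separately, you do both at once, and you also note nonemptiness via the zero map, a point the paper leaves implicit). Your alternative route through Definition~\ref{altdefbid} and the fact that $\operatorname{Der}(L)$ is a subspace of $\operatorname{End}(L)$ is a clean variant, but it is not needed; the main argument already matches the paper's.
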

\proof We want to show that $\operatorname{BiDer(L)}$ is a subspace of $\operatorname{B}(L,L; L)$, so we show that, for all $B_1, B_2, B\in\operatorname{BiDer(L)}$ and $\lambda\in\F$, $B_1+B_2$ and $\lambda B$ belongs to $\operatorname{BiDer(L)}$. In particular, since the biderivations are bilinear maps, $B_1+B_2$ and $\lambda B$ are bilinear maps, for every $B_1, B_2, B\in\operatorname{BiDer(L)}$ and $\lambda\in\F$. To prove that $B_1+B_2$ is a biderivation we have to show that $B_1+B_2$ satisfies ($\ref{cond1}$) and ($\ref{cond2}$). The same applies to $\lambda B$. Therefore
\begin{align*}
	(B_1+B_2)(\left[x,y\right],z)&=B_1(\left[x,y\right],z)+B_2(\left[x,y\right],z)=\\
	&=\left[x,B_1(y,z)\right]+\left[B_1(x,z),y\right]+\left[x,B_2(y,z)\right]+\left[B_2(x,z),y\right]=\\
	&=\left[x,B_1(y,z)+B_2(y,z)\right]+\left[B_1(x,z)+B_2(x,z),y\right]=\\
	&=\left[x,(B_1+B_2)(y,z)\right]+\left[(B_1+B_2)(x,z),y\right],
\end{align*}
for all $B_1, B_2\operatorname{BiDer(L)}$ and $x,y,z\in L$;

\begin{align*}
	(\lambda B)(\left[x,y\right],z)&=\lambda B(\left[x,y\right],z)=\lambda\left(\left[x,B(y,z)\right]+\left[B(x,z),y\right]\right)=\\
	&=\lambda\left[x,B(y,z)\right]+\lambda\left[B(x,z),y\right]=\\
	&=\left[x,(\lambda B)(y,z)\right]+\left[(\lambda B)(x,z),y\right],
\end{align*}
for all $\lambda\in\F$ and $B\in\operatorname{BiDer(L)}$.
In a similar manner we could show that $B_1+B_2$ and $\lambda B$ satisfy the condition $(\ref{cond2})$. 
\endproof

\begin{prop}
	The map
	\begin{align*}
		F\colon \operatorname{BiDer(L)}&\rightarrow \operatorname{M}_n(\F)^n \\
		B&\mapsto \left(B_1,\ldots,B_n\right).
	\end{align*}
	is a monomorphism of vector spaces.
\end{prop}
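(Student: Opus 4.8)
The plan is to verify the two defining properties of a monomorphism of vector spaces, namely that $F$ is a well-defined linear map and that it is injective. First I would record that $F$ is well defined: since $\left\{e_1,\ldots,e_n\right\}$ is a basis of $L$, the expansion $B(x,y)=\beta_1(x,y)e_1+\cdots+\beta_n(x,y)e_n$ is unique, so the coordinate bilinear forms $\beta_i$, and hence the matrices $B_i=\left(\beta_i(e_j,e_k)\right)_{j,k}$, are uniquely determined by $B$. Thus the assignment $B\mapsto\left(B_1,\ldots,B_n\right)$ is unambiguous.

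Next I would prove linearity. By Proposition \ref{biderisvs} the sum $B+B'$ and the scalar multiple $\lambda B$ are again biderivations, so it makes sense to apply $F$ to them. The key point is that passing to coordinates commutes with these operations: writing $B'(x,y)=\sum_i\beta'_i(x,y)e_i$, we have $(B+B')(x,y)=\sum_i\left(\beta_i+\beta'_i\right)(x,y)e_i$, so the $i$-th coordinate form of $B+B'$ is $\beta_i+\beta'_i$. Evaluating on basis pairs, $(\beta_i+\beta'_i)(e_j,e_k)=b^i_{jk}+{b'}^i_{jk}$, which is precisely the $(j,k)$-entry of $B_i+B'_i$. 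Hence $F(B+B')=\left(B_1+B'_1,\ldots,B_n+B'_n\right)=F(B)+F(B')$, and the identical argument with $\lambda\beta_i$ in place of $\beta_i+\beta'_i$ yields $F(\lambda B)=\lambda F(B)$. Therefore $F$ is a homomorphism of vector spaces.

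Finally I would establish injectivity, which for a linear map reduces to showing $\ker F=0$. Suppose $F(B)=\left(0,\ldots,0\right)$, that is, $B_i=0$ for every $i$. Then $b^i_{jk}=\beta_i(e_j,e_k)=0$ for all $i,j,k=1,\ldots,n$. Since each $\beta_i$ is bilinear and a bilinear form is completely determined by its values on pairs of basis vectors, it follows that $\beta_i\equiv 0$ for every $i$, and hence $B(x,y)=0$ for all $x,y\in L$, i.e.\ $B=0$. Thus $\ker F=0$ and $F$ is injective, so $F$ is a monomorphism of vector spaces.

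The computations here are entirely routine; the only aspect that requires a little care is the bookkeeping between a biderivation, its tuple of coordinate bilinear forms, and the tuple of matrices representing those forms. The structural fact doing the real work is that, once a basis of $L$ is fixed, a bilinear form on $L$ corresponds bijectively and linearly to its matrix of values on basis pairs, so I expect no genuine obstacle in this proof.
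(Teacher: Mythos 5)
Your proof is correct and follows essentially the same route as the paper's: linearity comes from the fact that passing from a bilinear map to its tuple of coordinate matrices respects sums and scalar multiples, and injectivity comes from observing that the zero tuple of matrices forces all the coordinate forms, hence $B$ itself, to vanish. The paper compresses this into two sentences; you simply fill in the bookkeeping (uniqueness of the coordinate expansion and the reduction of injectivity to $\ker F=0$) that the paper leaves implicit.
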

\proof The map $F$ is linear since the biderivations are bilinear maps. Furthermore $F(B)=\left(0,0,\ldots,0\right)$ if and only if $B=0$, since $B_1=B_2=\cdots=B_n=0_n\in \operatorname{M}_n(\F)$.
\endproof


In the following proposition we will rewrite the the conditions ($\ref{cond1}$) and ($\ref{cond2}$) in terms of bilinear forms associated with a biderivation of a Lie algebras and its structure constants.

\begin{prop}
	Let $L$ be a Lie algebra over a field $\F$ and $\left\{e_1,\ldots,e_n\right\}$ a basis of $L$. Let $\left\{c_{ij}^k\right\}$ be the structure constants of L, that is, $\left[e_,e_j\right]=\sum_{k=1}^{n}c_{ij}^ke_k$ for every $i,j,k=1,\ldots,n$. Then $B\colon L\times L\rightarrow L$ is a biderivation of $L$ if and only if  
	\[
	\sum_{t=1}^{n} c_{jk}^tb_{it}^r=\sum_{t=1}^{n}\left(c_{tk}^rb_{ij}^t+c_{jt}^rb_{ik}^t\right) \text{ and }  \sum_{t=1}^{n} c_{ij}^tb_{tk}^r=\sum_{t=1}^{n}\left(c_{tj}^rb_{ik}^t+c_{jt}^rb_{jk}^t\right),
	\]
	for every $i,j,k,r=1,\ldots, n$.
\end{prop}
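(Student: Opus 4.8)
The plan is to use that both defining identities (\ref{cond1}) and (\ref{cond2}) are \emph{trilinear}: since $B$ is bilinear and the bracket $[\,\cdot\,,\cdot\,]$ is bilinear, each side of each identity is linear in $x$, in $y$ and in $z$ separately. Hence the two conditions hold for all $x,y,z\in L$ if and only if they hold for every triple of basis vectors $x=e_i$, $y=e_j$, $z=e_k$. This reduces the whole statement to a finite family of scalar identities in $\F$, obtained by expanding both members on the basis $\{e_1,\dots,e_n\}$ and comparing coordinates.

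First I would substitute $x=e_i$, $y=e_j$, $z=e_k$ into (\ref{cond2}) and expand using $[e_a,e_b]=\sum_{k} c_{ab}^k e_k$ and $B(e_a,e_b)=\sum_{r} b_{ab}^r e_r$. The left-hand side becomes $B(e_i,[e_j,e_k])=\sum_{t} c_{jk}^t B(e_i,e_t)=\sum_{r}\left(\sum_{t} c_{jk}^t b_{it}^r\right)e_r$, while the two terms on the right give $[B(e_i,e_j),e_k]=\sum_{r}\left(\sum_{t} c_{tk}^r b_{ij}^t\right)e_r$ and $[e_j,B(e_i,e_k)]=\sum_{r}\left(\sum_{t} c_{jt}^r b_{ik}^t\right)e_r$. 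Equating the coefficient of each $e_r$, which is legitimate because $\{e_1,\dots,e_n\}$ is linearly independent, yields the first displayed identity for all $i,j,k,r$. The second displayed identity is produced by the identical procedure applied to (\ref{cond1}) instead of (\ref{cond2}).

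For the converse I would simply run this computation backwards: assuming the two scalar families hold for every choice of indices, re-expanding by trilinearity recovers (\ref{cond1}) and (\ref{cond2}) on arbitrary arguments, since a trilinear map is determined by its values on basis triples. This shows that $B$ satisfies (\ref{cond1}) and (\ref{cond2}), i.e.\ that $B$ is a biderivation, completing the equivalence.

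I expect no conceptual obstacle; the only delicate point is purely clerical. One must keep the summation index $t$ distinct from the output index $r$ and, above all, place the subscripts on each structure constant and each $b$ correctly, so that the resulting identity records faithfully which argument of $B$ and which bracket factor each symbol came from (for instance distinguishing $c_{tk}^r$ from $c_{jt}^r$, and $b_{ij}^t$ from $b_{ik}^t$). Getting this index bookkeeping right is essentially the whole content of the proof.
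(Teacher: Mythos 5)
Your proof is correct and takes essentially the same route as the paper: substitute basis triples into (\ref{cond1}) and (\ref{cond2}), expand via the structure constants and the entries $b_{ij}^k$, and compare coefficients of the basis, with trilinearity giving the converse (a point the paper leaves implicit but which you rightly spell out). One remark: carrying out your procedure on (\ref{cond1}) actually yields $\sum_{t=1}^{n} c_{ij}^t b_{tk}^r = \sum_{t=1}^{n}\left(c_{tj}^r b_{ik}^t + c_{it}^r b_{jk}^t\right)$, so the term $c_{jt}^r b_{jk}^t$ in the proposition's second identity is a typo for $c_{it}^r b_{jk}^t$ --- the paper's own computation confirms this.
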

\proof From $(\ref{cond1})$ we have
\[ B(\left[e_i,e_j\right],e_k)=\left[B(e_i,e_k),e_j\right]+\left[e_i,B(e_j,e_k)\right],
\] for every $i,k,k=1,\ldots,n$. Then
\[ B(\left[e_i,e_j\right],e_k)=B(\sum_{t=1}^{n}c_{ij}^te_t,e_k)=\sum_{t=1}^{n}c_{ij}^tB(e_t, e_k)=\sum_{r=1}^{n}\sum_{t=1}^{n}c_{ij}^tb_{tk}^re_r;
\]
on the other hand, 
\begin{gather*}
	\left[B(e_i,e_k),e_j\right]+\left[e_i,B(e_j,e_k)\right]=\\
	=[\sum_{t=1}^{n}b_{ik}^te_t,e_j]+[e_i,\sum_{t=1}^{n}b_{jk}^te_t]=\sum_{t=1}^{n}b_{ik}^t\left[e_t,e_j\right]+\sum_{t=1}^{n}b_{jk}^t\left[e_i,e_t\right]=\\
	=\sum_{r=1}^{n}\sum_{t=1}^{n}c_{tj}^rb_{ik}^te_r+\sum_{r=1}^{n}\sum_{t=1}^{n}c_{it}^rb_{jk}^te_r=\sum_{r=1}^{n}(\sum_{t=1}^{n}c_{tj}^rb_{ik}^t+c_{it}^rb_{jk}^t)e_r.
\end{gather*}
Since $\left\{e_i,\ldots,e_n\right\}$ is a basis of $L$, by comparing both equations we obtain the first equation. With similar computations we obtain the second equation.
\endproof

In light of this result one can asks if, in matricial terms, a relation between biderivations and derivations exists and, if so, what kind of relation there is. The next result proves that there is an affirmative answer in this sense and, besides, describes this relation. Bur first we introduce some notations. Let $(B_1,\ldots, B_n)$ the $n$-tuple of matrices associated to a biderivation $B$ of a $n$-dimensional Lie algebra $L$ respect to a fixed basis. We denote with $\left(B_i\right)^k$ the $k$-th row of the matrix $B_i$ and with $\left(B_i\right)_j$ the $j$-th column of $\left(B_i\right)$, for all $i=1,\ldots,n$. 

\begin{prop}
	Let $B\colon L\times L\rightarrow L$ be a bilinear map of a $n$- dimensional Lie algebra $L$ and let $B_1,\ldots,B_n \in  \operatorname{M}_n(\F)$ the matrices associated to $B$. 
	$B$ is a biderivation $L$ if and only if, for every $i=1,\ldots, n$, 
	\[
	\begin{pmatrix}
		\left(B_1\right)^i\\
		\left(B_2\right)^i\\
		\vdots\\
		\left(B_n\right)^i\\
	\end{pmatrix}
	\,\,\text{ and }\,\,
	\begin{pmatrix}
		\left(B_1\right)_i & \left(B_2\right)_i & \cdots & \left(B_n\right)_i
	\end{pmatrix}\]
	are matrices associated to two derivations of $L$.

\end{prop}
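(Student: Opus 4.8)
The plan is to reduce everything to the equivalent characterization of biderivations supplied by Definition \ref{altdefbid}, namely that $B$ is a biderivation if and only if both partial maps $B(x,-)$ and $B(-,y)$ are derivations of $L$ for all $x,y\in L$, and then to recognize the two block matrices in the statement as the matrices of these partial maps evaluated on basis vectors. First I would record the routine observation that $\operatorname{Der}(L)$ is a linear subspace of $\operatorname{End}(L)$: a scalar multiple and a sum of maps satisfying the Leibniz identity again satisfy it. Combined with the bilinearity of $B$, this lets me pass from all $x,y\in L$ to basis vectors. Indeed, writing $x=\sum_i x_i e_i$ gives $B(x,-)=\sum_i x_i\,B(e_i,-)$, so $B(x,-)$ is a derivation for every $x$ exactly when each $B(e_i,-)$ is a derivation; symmetrically, $B(-,y)$ is a derivation for every $y$ exactly when each $B(-,e_i)$ is a derivation. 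Hence $B$ is a biderivation if and only if, for every $i=1,\dots,n$, both linear maps $B(e_i,-)$ and $B(-,e_i)$ are derivations of $L$.

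Next I would identify these two families of partial maps with the block matrices in the statement. Fixing $i$ and using $B(e_i,e_j)=\sum_{k=1}^{n} b_{ij}^k e_k$, the matrix of the operator $B(e_i,-)$ has $(k,j)$-entry $b_{ij}^k$; since the $i$-th row of $B_k$ is $\bigl(b_{i1}^k,\dots,b_{in}^k\bigr)$, this matrix is precisely the vertically stacked matrix whose $k$-th row is $(B_k)^i$. In the same way, from $B(e_j,e_i)=\sum_{k=1}^{n} b_{ji}^k e_k$ the operator $B(-,e_i)$ is represented by the matrix whose $k$-th column collects the coefficients $b_{\bullet i}^k$, that is, by the horizontally stacked matrix whose $k$-th column is $(B_k)_i$. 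Matching these two identifications with the two equivalent conditions from the previous step then yields the claim in both directions at once.

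The delicate point is bookkeeping rather than any genuine difficulty: one must fix once and for all the convention relating a linear operator to its matrix (columns being the images of the basis vectors) and then check that stacking the $i$-th rows reproduces the matrix of $B(e_i,-)$ while juxtaposing the $i$-th columns reproduces that of $B(-,e_i)$, paying attention to the transpose implicit in passing between ``stacking rows'' and ``stacking columns''. As a sanity check one can verify, using the structure-constant equations of the preceding proposition, that the Leibniz identity for the operator $B(e_i,-)$ reproduces condition (\ref{cond2}) and that for $B(-,e_i)$ reproduces condition (\ref{cond1}); this is exactly the content being repackaged here in matrix form, which confirms that the identifications above are the correct ones.
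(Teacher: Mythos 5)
Your proposal is correct and takes essentially the same route as the paper: both arguments invoke the equivalent Definition \ref{altdefbid} (that $B(x,-)$ and $B(-,y)$ are derivations), evaluate the partial maps at basis vectors, and identify the stacked row blocks and column blocks with the matrices of $B(e_i,-)$ and $B(-,e_i)$ respectively. If anything, you are slightly more careful than the paper, which dismisses the ``if'' direction as immediate, leaving implicit both the linearity reduction from basis vectors to arbitrary $x,y$ (which you justify via the subspace structure of $\operatorname{Der}(L)$ and bilinearity of $B$) and the row-versus-column transpose convention that you explicitly flag.
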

\proof 
The "if" directions follows directly by the definition of derivation. The  other direction is not trivial. Let $\delta^x:=B(x,-)$ be the linear function that maps every $y\in L$ in $B(x,y)$, for every $x\in L$. Since $B$ is a biderivation, $\delta^x$ is a derivation of $L$ for every $x\in L$. Let $\mathcal{B}=\left\{e_1,\ldots,e_n\right\}$ be a basis of $L$. Thus, for every $e_i\in\left\{e_1,\ldots,e_n\right\}$, we have
\begin{gather*}
	\delta^{e_j}(e_i)=B(e_j,e_i)=\beta_1(e_j,e_i)e_1+\cdots+\beta_n(e_j,e_i)e_n=\\
	=b_{ji}^1e_1+\cdots+b_{ji}^ne_n,
\end{gather*} 
where $\beta_1,\ldots,\beta_n$ are the bilinear forms associated to the biderivation $B$ respect to the basis $\mathcal{B}$. 
The matrix associated to the derivation $\delta^{e_j}$ is
\[\begin{pmatrix}
	b_{ji}^1 & b_{j2}^1 & \cdots & b_{jn}^1 \\
	b_{ji}^2 & b_{j2}^2 & \cdots & b_{jn}^2 \\
	\vdots & \vdots & \ddots &\vdots \\
	b_{ji}^n & b_{j2}^n & \cdots & b_{jn}^n \\
\end{pmatrix}.\]
It is clear that the $i$-th row of the matrix above is the $j$-th row of $B_i$, the $i$-th matrix associated to the biderivation $B$. If we define the linear function $\delta_x:=B(-,x)$, the same arguments may be used to prove that the $i$-th row of the matrix associated to the derivation $\delta_{e_j}$ is the $j$-th column of the matrix $B_i$.
\endproof

Remind that the set $\operatorname{BiDer(L)}$ is a vector space, as we have seen before. In addition, every biderivation $B$ has an image in $\operatorname{M}_n(\F)^n$ via the linear map $\varphi$. It is clear that not all $n$-tuple of matrices in $\operatorname{M}_n(\F)$ correspond to a biderivation of $L$. For example:

\begin{ex}
	Let $L=L_{2,2}=\langle e_1, e_2\rangle_\F$ be a Lie algebra of dimension 2 over $\F$ such that $\left[e_1,e_2\right]=e_1$. The pair of matrices 
	\[\left(\begin{pmatrix}
		0 & 0 \\ 0 & 1
	\end{pmatrix}, \begin{pmatrix}
		1 & 0 \\ 0 & 0
	\end{pmatrix}\right)
	\]
	is not image of a biderivation $B$ because $B(e_1,e_1)=e_2$, $B(e_2,e_2)=e_1$ and
	\begin{gather*}
		B(\left[e_1,e_2\right]e_1)=B(e_1,e_1)=e_2 \\
		\left[e_1,B(e_2,e_1)\right]+\left[B(e_1,e_1),e_2\right]=\left[e_1,0\right]+\left[e_2,e_2\right]=0.
	\end{gather*}
\end{ex}
This is just an example of how hard could be to define a Lie bracket on $\operatorname{BiDer(L)}$ such that $\varphi$ is a Lie monomorphism. The Lie bracket of $\operatorname{M}_n(\F)$ is not closed in general, in fact. In the next proposition we show how a Lie bracket could be defined on the set of biderivations.
\begin{prop}
	If $F(\operatorname{BiDer(L)})\leq \operatorname{M}_n(\F)^n$ is a Lie subalgebra, then $\left\{\operatorname{BiDer(L)},\left\{-,-\right\}\right\}$ is a Lie algebra, where \[\left\{-,-\right\}=F^{-1}\circ\left[-,-\right]_{\operatorname{M}_n(\F)^n}.\]
\end{prop}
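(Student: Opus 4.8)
The plan is to recognise this statement as an instance of \emph{transport of structure} along a linear isomorphism. By the previous proposition, $F$ is a monomorphism of vector spaces, so its corestriction $\bar F\colon\operatorname{BiDer(L)}\to V$, where $V:=F(\operatorname{BiDer(L)})$, is a linear isomorphism; in particular $F^{-1}$ makes sense as the inverse $\bar F^{-1}\colon V\to\operatorname{BiDer(L)}$. The space $\operatorname{M}_n(\F)^n$ carries the componentwise commutator bracket $[-,-]_{\operatorname{M}_n(\F)^n}$, under which it is a Lie algebra, and by hypothesis $V$ is a Lie subalgebra of it. The idea is simply to pull the bracket of $V$ back to $\operatorname{BiDer(L)}$ through $\bar F$, so that $\bar F$ becomes a Lie algebra isomorphism and $\operatorname{BiDer(L)}$ inherits all the Lie algebra axioms.

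The first thing I would check is that $\left\{-,-\right\}$ is well defined, and this is exactly the point where the hypothesis enters. For $B,B'\in\operatorname{BiDer(L)}$ the element $[F(B),F(B')]_{\operatorname{M}_n(\F)^n}$ lies in $V$ precisely because $V$ is closed under the bracket; since $F$ is injective, $F^{-1}$ is unambiguously defined on $V$, so $\{B,B'\}:=F^{-1}\left([F(B),F(B')]_{\operatorname{M}_n(\F)^n}\right)$ is a genuine element of $\operatorname{BiDer(L)}$. Without the closure of $V$ the right-hand side need not lie in the image of $F$, which is exactly the difficulty illustrated by the preceding example.

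It then remains to verify the Lie algebra axioms, all of which transfer formally. Bilinearity of $\left\{-,-\right\}$ follows from the bilinearity of $[-,-]_{\operatorname{M}_n(\F)^n}$ together with the linearity of $F$ and of $F^{-1}$. The alternating property is immediate, since $\{B,B\}=F^{-1}\left([F(B),F(B)]_{\operatorname{M}_n(\F)^n}\right)=F^{-1}(0)=0$. For the Jacobi identity I would first record the identity $\{\{B_1,B_2\},B_3\}=F^{-1}\left(\left[[F(B_1),F(B_2)]_{\operatorname{M}_n(\F)^n},F(B_3)\right]_{\operatorname{M}_n(\F)^n}\right)$, which uses that $F\circ F^{-1}$ is the identity on $V$ and that $[F(B_1),F(B_2)]_{\operatorname{M}_n(\F)^n}\in V$. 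Summing this over the three cyclic permutations of $B_1,B_2,B_3$ and applying the linear map $F^{-1}$ to the Jacobi identity of $\operatorname{M}_n(\F)^n$ then yields $0$.

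I do not expect a genuine obstacle here: the entire content is the transport of structure, and every axiom descends from $\operatorname{M}_n(\F)^n$ through the isomorphism $\bar F$. The only point requiring care is the well-definedness of $\left\{-,-\right\}$, whose two ingredients are the closure of $V$ under the bracket (the hypothesis) and the injectivity of $F$ (already established); once these are in place, the verification of the axioms is purely mechanical.
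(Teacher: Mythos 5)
Your proposal is correct and follows essentially the same route as the paper: both arguments transport the componentwise commutator bracket back along the linear isomorphism of $\operatorname{BiDer(L)}$ onto its image, with the subalgebra hypothesis guaranteeing that $F^{-1}$ can be applied to $\left[F(B),F(B')\right]_{\operatorname{M}_n(\F)^n}$. Your write-up is in fact more careful than the paper's, which asserts the transfer of the Lie algebra axioms in one line, whereas you verify bilinearity, alternativity, and the Jacobi identity explicitly (correctly noting that $F\circ F^{-1}=\operatorname{id}$ on the image is needed for the Jacobi computation).
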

\proof 
Since $J:=F(\operatorname{BiDer(L)})$ is a Lie subalgebra of $\operatorname{M}_n(\F)^n$, then $J$ is closed under the Lie bracket. In addition, $F\colon\operatorname{BiDer}\rightarrow J$ is a linear isomorphism. Thus, for every $A,B\in\operatorname{BiDer(L)}$, we have
\[
\left\{A,B\right\}=F^{-1}(\left[A,B\right])=F^{-1}\left(\left[A_1,B_1\right]_{\operatorname{M}_n(\F)},\ldots,\left[A_n,B_n\right]_{\operatorname{M}_n(\F)}\right).
\]
Since $\left[-,-\right]_{\operatorname{M}_n(\F)^n}$ is a Lie bracket and $F^{-1}$ is a linear isomorphism, the bilinear map $\left\{-,-\right\}$ defined above is a Lie bracket on $\operatorname{BiDer(L)}$. 
\endproof

We would to conclude this first section with a couple of result regarding biderivations on two-step nilpotent Lie algebras. Remind that the \emph{lower central series} defined recursively as the series $L^1=L'$ and $L^k=\left[L,L^{k-1}\right]$, for $k\geq2$. $L$ is \emph{nilpotent} if exists $k\geq1$ such that $L^{k-1}\neq0$ and $L^k=0$. A Lie algebra $L$ is \emph{two-step nilpotent} if $k=2$. For such Lie algebras the ideal commutator ideal $L'$ is contained in the center of $L$.  On the other hand, the condition $L'\subseteq \operatorname{Z}(L)$ implies that $L$ is a two-step nilpotent Lie algebra.

\begin{prop}
	Let $L$ be a two-step nilpotent Lie algebra over a field $\F$ and $B$ a biderivation of $L$. Then, for every $x\in L$ and $z\in L'$, $B(x,z), B(z,x)\in L'\subseteq Z(L)$.
\end{prop}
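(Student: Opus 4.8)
The plan is to exploit the two defining identities of a biderivation together with the fact—recalled just before the statement—that two-step nilpotency forces $L'\subseteq\operatorname{Z}(L)$. Since $L'=[L,L]$ is by definition spanned by the brackets $[a,b]$ with $a,b\in L$, and since $B$ is bilinear in each slot, it suffices to establish the two memberships when $z$ is a single generator $z=[a,b]$; the general case then follows by taking linear combinations.

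For the inclusion $B(x,z)\in L'$, I would fix $x\in L$, write $z=[a,b]$, and apply identity (\ref{cond2}) in the second argument:
\begin{equation*}
	B(x,[a,b])=[B(x,a),b]+[a,B(x,b)].
\end{equation*}
Both summands on the right are Lie brackets of elements of $L$, hence they lie in $L'=[L,L]$; thus $B(x,[a,b])\in L'$, and by bilinearity $B(x,z)\in L'$ for every $z\in L'$.

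The argument for $B(z,x)$ is symmetric, using identity (\ref{cond1}) in the first argument: with $z=[a,b]$,
\begin{equation*}
	B([a,b],x)=[a,B(b,x)]+[B(a,x),b],
\end{equation*}
and again each term is a bracket, so $B([a,b],x)\in L'$ and therefore $B(z,x)\in L'$ for all $z\in L'$. Finally, the inclusion $L'\subseteq\operatorname{Z}(L)$ recorded above upgrades both conclusions to the stated $B(x,z),B(z,x)\in L'\subseteq\operatorname{Z}(L)$.

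I do not expect a genuine obstacle here: the result is a direct consequence of the defining identities once one reduces to generators of $L'$. The only point requiring a moment's care is this reduction—verifying that it is enough to test the identities on elements of the form $[a,b]$—which is immediate from the bilinearity of $B$ and the definition $L'=[L,L]$. Note also that the hypothesis of two-step nilpotency enters only at the very end, to identify $L'$ with a subspace of the center; the core membership $B(x,z),B(z,x)\in L'$ holds for any Lie algebra and any biderivation.
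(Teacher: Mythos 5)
Your proposal is correct and follows essentially the same route as the paper: both expand $z\in L'$ as a linear combination of brackets (the paper uses brackets of basis elements, you use arbitrary generators $[a,b]$), apply the biderivation identities (\ref{cond1}) and (\ref{cond2}) to land each term in $L'$, and invoke two-step nilpotency only to conclude $L'\subseteq \operatorname{Z}(L)$. Your remark that the core membership in $L'$ holds in any Lie algebra is a nice observation, but the argument itself is the same.
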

\begin{proof}
	Let $\left\{e_1,\ldots,e_n,\right\}$ be a basis of $L$. For every $z\in L'$,  $z=\sum_{i,j=1}^{n}\alpha_{ij}\left[e_i,e_j\right]$ for some $\alpha_{ij}\in\F$. Thus we have
	\begin{gather*}
		B(x,z)=B\left(x,\sum_{i,j=1,i<j}^{n}\alpha_{ij}\left[e_i,e_j\right]\right)=\\
		=\sum_{i,j=1}^{n}\alpha_{ij}B(x,\left[e_i,e_j\right])=\sum_{i,j=1}^{n}\alpha_{ij}(\left[B(x,e_i), e_j\right]+\left[e_i,B(x,e_j)\right])\in L'.
	\end{gather*}

	With similar computations we obtain $B(z,x)$ and these results prove the statement.
\end{proof}
\begin{corollary}
	Let $L$ be a two-step nilpotent Lie algebra and $B$ a biderivation of $L$. Then, for every $z,z'\in L'$, $B(z,z')=0$.
\end{corollary}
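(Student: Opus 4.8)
The plan is to reduce the statement to the preceding proposition by exploiting the bilinearity of $B$ together with the hypothesis that $L'\subseteq\operatorname{Z}(L)$. First I would fix a basis $\{e_1,\ldots,e_n\}$ of $L$ and write an arbitrary $z'\in L'$ as a linear combination $z'=\sum_{i,j}\alpha_{ij}\left[e_i,e_j\right]$ with $\alpha_{ij}\in\F$, exactly as in the proof of the previous proposition. By bilinearity of $B$ in its second argument, computing $B(z,z')$ then reduces to computing each $B(z,\left[e_i,e_j\right])$.

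Next I would apply the biderivation condition (\ref{cond2}) to each such term to obtain
\[
B(z,\left[e_i,e_j\right])=\left[B(z,e_i),e_j\right]+\left[e_i,B(z,e_j)\right].
\]
At this point the preceding proposition provides the essential input: since $z\in L'$ and $e_i,e_j\in L$, both $B(z,e_i)$ and $B(z,e_j)$ lie in $L'$.

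The decisive step is then to invoke that $L$ is two-step nilpotent, i.e.\ $L'\subseteq\operatorname{Z}(L)$. Because $B(z,e_i)$ and $B(z,e_j)$ are central, both brackets on the right-hand side vanish, so $B(z,\left[e_i,e_j\right])=0$ for every $i,j$. Summing over $i,j$ against the coefficients $\alpha_{ij}$ yields $B(z,z')=0$, as claimed.

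I do not expect a genuine obstacle here, since all the nontrivial content has already been absorbed into the preceding proposition, which guarantees that $B(z,-)$ sends $L$ into $L'=\operatorname{Z}(L)$. The only thing one needs to observe is that, once the Leibniz-type expansion of (\ref{cond2}) feeds both arguments into $\operatorname{Z}(L)$-valued expressions, the centrality of $L'$ forces the brackets to collapse. An identical computation handles $B(z',z)$ by symmetry, though this is not required for the statement as phrased.
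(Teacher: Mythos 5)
Your proof is correct and follows exactly the route the paper intends: the corollary is stated without proof as an immediate consequence of the preceding proposition, and your argument (expand $z'$ as a sum of brackets, apply condition (\ref{cond2}), and use the proposition to place $B(z,e_i)$ and $B(z,e_j)$ in $L'\subseteq\operatorname{Z}(L)$ so the brackets vanish) is precisely the derivation the authors leave implicit.
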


\section{Biderivations of complete Lie algebras}

X.\ Tang in \cite{tang2018} proved that all biderivations of a complex simple Lie algebra are inner. We will extend this result. We begin this section with a result that makes it easier the study of biderivations of complete Lie algebras (in particular, if the Lie algebra is semisimple). 
\\
In 1962 N.\ Jacobson gave in \cite{jacobson1979lie} the definition of \emph{complete} Lie algebra, that is a Lie algebra $L$ with $\operatorname{Z}(L)=0$ and $\operatorname{Der(L)}=\operatorname{ad(L)}$. The next result makes easier the study of biderivations of this class of Lie algebras.

\begin{prop}\label{propbiderss}
	Let $L$ be a complete Lie algebra over a field $\mathbb{F}$. $B$ is a biderivation of $L$ if and only if exist two linear maps \hbox{$\varphi,\psi\in\operatorname{End(L)}$} such that, for every $x,y\in L$,
	\[B(x,y)=\left[\varphi(x),y\right]=\left[x,\psi(y)\right].\]
\end{prop}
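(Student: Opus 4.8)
The plan is to prove the two implications separately, in each case using the equivalent formulation of Definition~\ref{altdefbid} together with the two hypotheses defining completeness, namely $\operatorname{Z}(L)=0$ and $\operatorname{Der}(L)=\operatorname{ad}(L)$. The easy direction ($\Leftarrow$) comes first: if $\varphi,\psi\in\operatorname{End}(L)$ satisfy $B(x,y)=[\varphi(x),y]=[x,\psi(y)]$, then for fixed $x$ the map $B(x,-)=\operatorname{ad}_{\varphi(x)}$ is an inner derivation, and for fixed $y$ the map $B(-,y)=-\operatorname{ad}_{\psi(y)}$ is the negative of an inner derivation; both are therefore derivations, so $B$ is a biderivation by Definition~\ref{altdefbid}, bilinearity being immediate from the linearity of $\varphi,\psi$ and of the bracket.

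For the nontrivial direction ($\Rightarrow$) the main tool is the observation that for a complete Lie algebra the adjoint map $\operatorname{ad}\colon L\to\operatorname{Der}(L)$ is a linear \emph{isomorphism}: it is surjective because $\operatorname{Der}(L)=\operatorname{ad}(L)$, and injective because its kernel is exactly $\operatorname{Z}(L)=0$. Now fix $x\in L$. Since $B$ is a biderivation, $B(x,-)$ is a derivation of $L$, hence equals $\operatorname{ad}_{w}$ for a \emph{unique} $w\in L$, and I define $\varphi(x):=w=\operatorname{ad}^{-1}\!\big(B(x,-)\big)$, so that $B(x,y)=[\varphi(x),y]$ for all $y$. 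The map $x\mapsto B(x,-)$ is linear into $\operatorname{Der}(L)$ by bilinearity of $B$, so $\varphi$ is a composition of linear maps and therefore lies in $\operatorname{End}(L)$; alternatively, one can deduce linearity directly from uniqueness by comparing $B(ax_1+bx_2,y)=[a\varphi(x_1)+b\varphi(x_2),y]$ with $B(ax_1+bx_2,y)=[\varphi(ax_1+bx_2),y]$.

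Symmetrically, fixing $y$ and using that $B(-,y)$ is a derivation produces a unique $w_y\in L$ with $B(-,y)=\operatorname{ad}_{w_y}$, i.e. $B(x,y)=[w_y,x]$; setting $\psi(y):=-w_y$ rewrites this as $B(x,y)=[x,\psi(y)]$, and the same argument shows $\psi\in\operatorname{End}(L)$. Because the two identities $B(x,y)=[\varphi(x),y]$ and $B(x,y)=[x,\psi(y)]$ hold for all $x,y$ by construction, they combine into the asserted chain of equalities. I expect the only genuine subtlety to be the well-definedness and linearity of $\varphi$ and $\psi$, which rest entirely on $\operatorname{Z}(L)=0$: this is precisely what makes the element representing each inner derivation unique, and hence what makes the assignments respect the vector-space structure. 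The single sign in passing from $\operatorname{ad}_{w_y}$ to $\psi(y)=-w_y$ is the only place where the asymmetry between the two slots of $B$ surfaces, and it should be tracked carefully.
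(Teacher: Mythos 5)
Your proof is correct and takes essentially the same route as the paper's: both directions rest on completeness, writing $B(x,-)$ and $B(-,y)$ as inner derivations (with the sign absorbed into $\psi$) and using $\operatorname{Z}(L)=0$ to get well-definedness and linearity of $\varphi$ and $\psi$. Your phrasing of linearity as $\varphi=\operatorname{ad}^{-1}\circ\bigl(x\mapsto B(x,-)\bigr)$, with $\operatorname{ad}$ an isomorphism onto $\operatorname{Der}(L)$, is just a cleaner packaging of the paper's argument that $\varphi(x+y)-\varphi(x)-\varphi(y)\in\operatorname{Z}(L)=0$, not a different idea.
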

\proof
The "if" direction is trivial to prove. To prove the other direction we recall that, by Definition \ref{altdefbid}, $B(x,-)$ and $B(-,x)$ are derivations of $L$, for every $x\in L$. Since $L$ is complete, all derivations of $L$ are inner. Thus, for every $x\in L$, there exist $u,v\in L$ such that $B(x,-)=\operatorname{ad}_u(-)=\left[u,-\right]$ and $B(-,x)=\operatorname{ad}_v(-)=\left[v,-\right]$. So we can define two maps $\varphi,\psi:L\rightarrow L$ in the following way
\[
\varphi\colon x\mapsto u\,\,\text{ and }\,\,\psi\colon x\mapsto -v.
\]
Now we have to prove that $\varphi$ and $\psi$ are linear. By definition of $\varphi$ we have \[
\varphi(x+y)(t)=\operatorname{ad}_{\varphi(x+y)}(t)=\left[\varphi(x+y),t\right], 
\]
for every $t\in L$. Moreover, 
\[\varphi(x)(t)+\varphi(y)(t)=\operatorname{ad}_{\varphi(x)}(t)+\operatorname{ad}_{\varphi(y)}(t)=\left[\varphi(x),t\right]+\left[\varphi(y),t\right]=\left[\varphi(x)+\varphi(y),t\right].
\]
Since $B$ is a bilinear map, for every $t\in L$ we have $\left[\varphi(x+y),t\right]=\left[\varphi(x)+\varphi(y),t\right]$ and this implies that $\varphi(x+y)-\varphi(x)-\varphi(y)\in Z(L)$. The center of $L$ is zero because $L$ is complete, then $\varphi(x+y)=\varphi(x)+\varphi(y)$ for all $x,y\in L$.
By similar computations we can affirm that $\psi$ is also linear. To conclude the proof we observe that $\left[\varphi(x),y\right]$ and $\left[x,\psi(y)\right]$ are biderivations of $L$ because the Lie bracket is skew-symmetric and verifies the Jacoby identity.
\endproof

With the same assumptions made in the last proposition we prove the following result.

\begin{lemma}\label{lembiderss}
	If $B(x,y)=\left[\varphi(x),y\right]=\left[x,\psi(y)\right]$ is a biderivation of $L$ and $\varphi=\lambda\operatorname{id}_L$ for some $\lambda\in\mathbb{F}$, then $\varphi=\psi$.
\end{lemma}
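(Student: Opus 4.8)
The plan is to exploit the twin representation $B(x,y)=[\varphi(x),y]=[x,\psi(y)]$ guaranteed by Proposition \ref{propbiderss}, feed in the hypothesis $\varphi=\lambda\operatorname{id}_L$, and then let the centerlessness of $L$ do the work. Concretely, I would first substitute $\varphi=\lambda\operatorname{id}_L$ into the left representation to obtain, for all $x,y\in L$,
\[
B(x,y)=[\varphi(x),y]=[\lambda x,y]=\lambda[x,y].
\]

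Next I would compare this with the right representation $B(x,y)=[x,\psi(y)]$. Equating the two expressions and using bilinearity of the bracket gives $[x,\psi(y)]=\lambda[x,y]=[x,\lambda y]$, hence
\[
[x,\psi(y)-\lambda y]=0
\]
for all $x,y\in L$. Fixing $y$ and letting $x$ range over all of $L$, this says precisely that $\psi(y)-\lambda y$ is centralized by every element of $L$, i.e.\ $\psi(y)-\lambda y\in\operatorname{Z}(L)$.

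At this point the crucial step is invoking completeness: since $L$ is complete we have $\operatorname{Z}(L)=0$, so $\psi(y)-\lambda y=0$, that is, $\psi(y)=\lambda y$ for every $y\in L$. Therefore $\psi=\lambda\operatorname{id}_L=\varphi$, which is the claim. I do not expect a genuine obstacle here; the argument is short and the only substantive ingredient is the vanishing of the center, which is exactly the part of the completeness hypothesis that forces $\psi$ and $\varphi$ to coincide rather than merely agree modulo the center. It is worth noting that the same reasoning shows, more generally, that whenever the two endomorphisms attached to a biderivation differ by a central-valued map they must in fact be equal in the complete case; the hypothesis $\varphi=\lambda\operatorname{id}_L$ is only used to make the left-hand bracket proportional to $[x,y]$ so that the comparison with $\psi$ becomes transparent.
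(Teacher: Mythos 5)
Your proof is correct and follows essentially the same route as the paper's: substitute $\varphi=\lambda\operatorname{id}_L$ to get $B(x,y)=\lambda[x,y]$, compare with $[x,\psi(y)]$ to conclude $\psi(y)-\lambda y\in\operatorname{Z}(L)$, and invoke $\operatorname{Z}(L)=0$ from completeness. No gaps; your closing remark about maps differing by a central-valued map is a fair (and accurate) generalization of the same argument.
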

\proof For any $x,y\in L$ we have
\[B(x,y)=\left[\varphi(x),y\right]=\left[\lambda x,y\right]=\lambda\left[x,y\right].\] 
On the other hand $B(x,y)=\left[x,\psi(y)\right]$, then $\left[x,\lambda y-\psi(y)\right]=0$, for all $x,y\in L$. This implies that $\lambda y-\psi(y)$ belongs to the center of $L$, that is trivial because $L$ is complete and this conlcude the proof.\endproof
It is well known that all derivations of a simple Lie algebra are inner (see \cite{erdmann2006introduction}) and this happens also for semisimple Lie algebras (see \cite{humphreys2012introduction}). Biderivations of complex simple Lie algebras are studied by X.\ Tang in \cite{tang2018} where he proved the following result.

\begin{thm}\cite{tang2018}\label{thmtang}
	Suppose that $L$ is a finite-dimensional complex simple Lie algebra. Then $B$ is a biderivation of $L$ if and only if it is inner, i.e.\ there is a complex number $\lambda$ such that
	\[B(x,y)=\lambda\left[x,y\right], \, \forall x,y\in L.\]
\end{thm}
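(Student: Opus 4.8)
The plan is to reduce Tang's theorem to the structure of commuting maps on simple Lie algebras, using the tools already established. First I would note that a finite-dimensional complex simple Lie algebra $L$ is complete: it is centerless, $\operatorname{Z}(L)=0$, and all its derivations are inner, so $\operatorname{Der}(L)=\operatorname{ad}(L)$. Hence Proposition \ref{propbiderss} applies and any biderivation $B$ can be written as $B(x,y)=[\varphi(x),y]=[x,\psi(y)]$ for suitable $\varphi,\psi\in\operatorname{End}(L)$. Reading the defining identity with the two arguments interchanged (replace $(x,y)$ by $(y,x)$ and use skew-symmetry of the bracket) also yields the companion identity $[\psi(x),y]=[x,\varphi(y)]$ for all $x,y$. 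The goal then becomes to prove $\varphi=\lambda\operatorname{id}_L$ for some $\lambda\in\C$, since Lemma \ref{lembiderss} immediately upgrades this to $\psi=\varphi=\lambda\operatorname{id}_L$, whence $B(x,y)=[\lambda x,y]=\lambda[x,y]$; the reverse implication is the trivial observation that every map $\lambda[\cdot,\cdot]$ is a biderivation.

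To produce the scalar I would exploit the two identities above. Setting $y=x$ in $B(x,x)=[\varphi(x),x]=[x,\psi(x)]$ gives $[(\varphi+\psi)(x),x]=0$ for every $x$, i.e.\ $\varphi+\psi$ is a linear commuting map on $L$. More precisely, adding and subtracting the two identities shows that $\varphi+\psi$ satisfies $[(\varphi+\psi)(x),y]=[x,(\varphi+\psi)(y)]$ while $\varphi-\psi$ satisfies $[(\varphi-\psi)(x),y]=-[x,(\varphi-\psi)(y)]$. Thus everything reduces to understanding linear maps $T$ on $L$ obeying $[T(x),y]=\varepsilon[x,T(y)]$ for a fixed sign $\varepsilon=\pm1$. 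The claim I would establish is that any such $T$ is a homomorphism of the adjoint $L$-module $(L,\operatorname{ad})$, i.e.\ $T([a,x])=[a,T(x)]$ for all $a,x$. Granting this, irreducibility of the adjoint representation of a simple Lie algebra together with Schur's lemma over the algebraically closed field $\C$ forces $T=\tau\operatorname{id}_L$. For $\varepsilon=+1$ this gives $\varphi+\psi=\mu\operatorname{id}_L$; for $\varepsilon=-1$ the relation $\tau[x,y]=-\tau[x,y]$ forces $\tau=0$, so $\varphi-\psi=0$. Combining, $\varphi=\psi=\tfrac{\mu}{2}\operatorname{id}_L$, which is the desired conclusion with $\lambda=\mu/2$.

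The crux --- and the step I expect to be the main obstacle --- is proving that a map $T$ with $[T(x),y]=\varepsilon[x,T(y)]$ is an $L$-module endomorphism. Purely formal Jacobi manipulations turn out to be self-referential here: writing $D_aT:=[\operatorname{ad}_a,T]$ for the defect of the intertwining property, one finds that $D_aT$ again satisfies an identity of the very same type, so no induction on brackets closes the argument. Overcoming this requires genuine structure theory, and I see two routes. The first is to pass to Killing-form adjoints: invariance and nondegeneracy of $\kappa$ translate $[T(x),y]=\varepsilon[x,T(y)]$ into a relation between $T$ and its adjoint $T^{\dagger}$ on commutators $[x,y]$ (which span $L$ since $L=[L,L]$), and combining this with the decomposition of $\operatorname{End}(L)\cong L\otimes L^{*}$ as an $L$-module --- whose trivial isotypic component is one-dimensional by Schur --- pins $T$ down to a scalar. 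The second, closer to Tang's original argument, is computational: fix a Cartan subalgebra $\mathfrak{h}$ with root space decomposition $L=\mathfrak{h}\oplus\bigoplus_{\alpha}L_\alpha$, use the identity to show that $T$ preserves each one-dimensional $L_\alpha$ and acts there by a scalar $c_\alpha$, and then prove that all the $c_\alpha$ (and the action on $\mathfrak{h}$) coincide. This final equalization of the scalars across distinct root spaces, which relies on the connectedness of the Dynkin diagram (equivalently, the irreducibility of the root system of a simple $L$) via root strings and the brackets $[e_\alpha,e_\beta]$, is the genuine heart of the proof; the rest is bookkeeping.
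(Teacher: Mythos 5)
First, a point of orientation: the paper itself never proves Theorem \ref{thmtang}. It is imported from \cite{tang2018} as a known result and then consumed inside the proof of Theorem \ref{thmbiderss} (that is where the scalars $\lambda_i,\mu_i$ on the simple factors come from). So your attempt cannot be compared to an internal argument; it has to stand on its own. Its reduction phase is correct and well executed: a complex simple Lie algebra is complete, so Proposition \ref{propbiderss} gives $B(x,y)=[\varphi(x),y]=[x,\psi(y)]$; swapping arguments yields $[\psi(x),y]=[x,\varphi(y)]$; adding and subtracting shows that $\varphi+\psi$ is commuting and $\varphi-\psi$ is skew-commuting (exactly the symmetrization device the paper uses in its Section 4 proposition); and, granting that any $T$ with $[T(x),y]=\varepsilon[x,T(y)]$ intertwines the adjoint action, Schur's lemma and Lemma \ref{lembiderss} finish the argument correctly, including the observation that a scalar skew-commuting map must vanish.

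The gap is that this intertwining claim carries the entire weight of Tang's theorem, and you do not prove it --- you say so yourself, and your diagnosis of why the formal manipulations loop (the defect $[\operatorname{ad}_a,T]$ satisfies the same identity) is accurate. Neither of your two proposed routes closes it. Route 1 is circular as written: Schur's lemma identifies the trivial isotypic component of $\operatorname{End}(L)$ with $\C\operatorname{id}$, but to invoke this you must first place $T$ in that component, which is precisely the unproven intertwining claim; the Killing-form relation $T^{\dagger}([x,y])=[T(x),y]$ does not by itself achieve this. Route 2 (root space decomposition and equalization of the scalars $c_\alpha$ across the connected Dynkin diagram) is essentially Tang's own proof, and the step you defer as ``the genuine heart'' is the theorem. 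What you have, then, is a correct and clean reduction of Theorem \ref{thmtang} to the classification of commuting and skew-commuting linear maps on a complex simple Lie algebra, plus an honest map of where the difficulty lives --- but not a proof. If you want to repair it within this paper's framework, the $\varepsilon=+1$ case is exactly the commuting-maps theorem of Bre\v{s}ar and Zhao \cite{bresarzhao2018} (commuting maps on perfect centerless Lie algebras lie in the centroid, which for a simple complex algebra is $\C\operatorname{id}$), a reference already in the paper's bibliography; the $\varepsilon=-1$ case, equivalently the vanishing of symmetric biderivations, still requires its own argument. Either way the proof becomes one by citation, which is what the paper does in the first place.
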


One can asks if something like that happens to semisimple Lie algebras and the answer is affirmative. We would to remind that every semisimple Lie algebra has trivial center and all derivations on it are inner. Thus every semisimple Lie algebra is complete. While it is straightforward to demonstrate that all semisimple Lie algebras are complete, proving that the converse (that all complete Lie algebras are semisimple) is false, is not as obvious. 
E. Angelopoulos constructed in \cite{angelopoulos} a class of \emph{sympathetic} Lie algebras, i.e.\ complete Lie algebras with $\left[L,L\right]=L$, which are not semisimple. Notably, there exists a counter-example within this class of minimal dimension, namely a Lie algebra of dimension 35, whose Levi subalgebra is isomorphic to $\mathfrak{sl}(2)$. Now we are ready to prove the main result of this section. From now on we indicate with $P'$ the transpose matrix of a matrix $P$.

\begin{thm}\label{thmbiderss}
	Let $L=L_1\oplus \cdots \oplus L_t$ be an $n$-dimensional complex semisimple Lie algebra, where $L_i$ is a complex simple Lie algebra with $\dim_\mathbb{C} L_i=n_i$ for every $i=1,\ldots, t$ and $n_1+\cdots+n_t=n$. A bilinear map $B\colon L\times L\rightarrow L$ is a biderivation of $L$ if and only if exist $\lambda_1,\ldots,\lambda_t\in\mathbb{C}$ such that
	\[B(x_1+\cdots+x_t, y_1+\cdots+y_t)=\lambda_1\left[x_1,y_1\right]+\cdots+\lambda_t\left[x_t,y_t\right],\]
	with $x_i,y_i\in L_i$, $i=1,\ldots,t$.
\end{thm}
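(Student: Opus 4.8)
The plan is to reduce the semisimple case to Tang's simple case (Theorem \ref{thmtang}) by exploiting the decomposition of $L$ into simple ideals. Since every semisimple Lie algebra has trivial center and only inner derivations, it is complete, so Proposition \ref{propbiderss} applies and any biderivation $B$ may be written as $B(x,y)=\left[\varphi(x),y\right]=\left[x,\psi(y)\right]$ for suitable $\varphi,\psi\in\operatorname{End}(L)$. Throughout I would use the standard structural fact that in $L=L_1\oplus\cdots\oplus L_t$ each $L_i$ is an ideal, so $\left[L_i,L_j\right]=0$ whenever $i\neq j$ and $L_i\cap L_j=0$.

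For the \emph{if} direction I would simply exhibit the maps required by Proposition \ref{propbiderss}: given $\lambda_1,\ldots,\lambda_t$, let $\varphi=\psi$ be the endomorphism acting as multiplication by $\lambda_i$ on each summand $L_i$. Because the cross brackets $\left[L_i,L_j\right]$ vanish for $i\neq j$, one gets $\left[\varphi(x),y\right]=\left[x,\psi(y)\right]=\sum_{i}\lambda_i\left[x_i,y_i\right]$, and Proposition \ref{propbiderss} certifies that this bilinear map is a biderivation.

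The \emph{only if} direction is the real content, and I would organize it in three steps. First, I would show that the off-diagonal values vanish: for $x_i\in L_i$ and $y_j\in L_j$ with $i\neq j$, the representation $B(x_i,y_j)=\left[\varphi(x_i),y_j\right]$ places $B(x_i,y_j)$ in $L_j$ (only the $L_j$-component of $\varphi(x_i)$ can bracket nontrivially against $y_j$), while the representation $B(x_i,y_j)=\left[x_i,\psi(y_j)\right]$ places it symmetrically in $L_i$; since $L_i\cap L_j=0$ this forces $B(x_i,y_j)=0$. Second, the same component computation shows that $B$ maps $L_i\times L_i$ into $\left[L_i,L_i\right]=L_i$, so its restriction $B|_{L_i\times L_i}$ is a well-defined bilinear self-map of the subalgebra $L_i$; because every bracket occurring in $(\ref{cond1})$ and $(\ref{cond2})$ for arguments in $L_i$ stays inside $L_i$, this restriction is a biderivation of the \emph{simple} Lie algebra $L_i$, and Theorem \ref{thmtang} supplies $\lambda_i\in\mathbb{C}$ with $B(x_i,y_i)=\lambda_i\left[x_i,y_i\right]$. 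Third, bilinearity together with the vanishing of the cross terms assembles these block identities into $B(x_1+\cdots+x_t,\,y_1+\cdots+y_t)=\sum_i\lambda_i\left[x_i,y_i\right]$, which is the desired formula.

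The hard part will be the off-diagonal vanishing in the first step, since this is precisely where the two-sided representation $\left[\varphi(x),y\right]=\left[x,\psi(y)\right]$ from Proposition \ref{propbiderss} is indispensable: $\varphi$ alone only confines $B(x_i,y_j)$ to $L_j$, and one needs the dual confinement to $L_i$ to conclude it is zero. A secondary point requiring care is verifying that $B|_{L_i\times L_i}$ is genuinely a biderivation of the abstract simple algebra $L_i$ (and not merely a bilinear map), so that Theorem \ref{thmtang} is legitimately applicable and the single scalar it produces per block is globally consistent.
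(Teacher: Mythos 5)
Your proof is correct, and its skeleton --- Proposition \ref{propbiderss} to obtain the two representations $B(x,y)=\left[\varphi(x),y\right]=\left[x,\psi(y)\right]$, reduction to the simple ideals, and Theorem \ref{thmtang} applied blockwise --- matches the paper's. But the execution of the reduction step is genuinely different. The paper works in coordinates: it fixes bases, forms the block-diagonal structure matrices $\gamma_{ij}$, translates the two representations into the matrix identities $P'\gamma_{ij}=\gamma_{ij}Q$, and uses $\operatorname{Z}(L_i)=0$ to conclude that the off-diagonal blocks of $P$ and $Q$ vanish, i.e.\ that $\varphi$ and $\psi$ preserve each ideal $L_i$; only then does it restrict $B$ to the diagonal blocks. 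You instead kill the cross terms directly at the level of values of $B$: since $B(x_i,y_j)=\left[\varphi(x_i),y_j\right]\in L_j$ while $B(x_i,y_j)=\left[x_i,\psi(y_j)\right]\in L_i$, one gets $B(x_i,y_j)\in L_i\cap L_j=0$ for $i\neq j$ --- no basis, no matrices, and no need to prove that $\varphi$ itself is block-diagonal. Your route is shorter and also dispenses with Lemma \ref{lembiderss}: because your final formula uses only the $\varphi$-representation, you need a single scalar per block rather than a pair $\lambda_i,\mu_i$ that must afterwards be reconciled. What the paper's computation buys in exchange is the explicit block-diagonal form of $P$ and $Q$, which is precisely the matricial datum reused after the theorem (in the decomposition $V=V^+\oplus V^-$ and the conclusion $\operatorname{BiDer(L)}\cong\mathbb{C}^t$), and which illustrates the matrix framework that is the paper's stated theme; your argument, being coordinate-free, does not produce that byproduct.
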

\proof For every $i=1,\ldots,t$ let $\mathcal{B}_i=\left\{e_{i_1},\ldots,e_{i_{n_i}}\right\}$ be a basis of $L_i$ and let $\left\{(c_i)^k_{lm}\right\}_{l,m,k=1,\ldots, n_i}$ be the structure constants of $L_i$, i.e.\ $\left[e_{i_l},e_{i_m}\right]=\sum_{k=1}^{n_i}(c_i)^k_{lm}e_{i_k}$ for every $e_{i_l},e_{i_m}\in\mathcal{B}_i$. Then the structure matrices of $L_i$ forms an $n_i$-tupla of $n_i\times n_i$ matrices and let this tupla be $\left(C_{i_1}\ldots, C_{i_{n_i}}\right)$, where $C_{i_j}=(c_i)^j_{lm}$ with $j=1,\ldots, n_i$. Thus the structures matrices of $L$ are $\gamma_{ij}$, with $i=1,\ldots,t$ and, for any $i$, $j=1,\ldots, n_i$, where
\[\gamma_{ij}=\begin{pmatrix}
	0 & 0 & \cdots & \cdots & 0 \\
	& \ddots &  & & \\
	\vdots & & C_{i_j} & & \vdots \\
	&  &  & \ddots & \\
	0 & \cdots & \cdots & 0 & 0 
\end{pmatrix}\]
is the diagonal block matrix where the $i$-th diagonal block is the matrix $C_{i_j}\in \operatorname{M}_{n_i}(\mathbb{C})$. By Proposition $\ref{propbiderss}$ there exist $\varphi,\psi\in\operatorname{End(L)}$ such that 
\begin{equation}\label{eqthm1}
	B(x,y)=\left[\varphi(x),y\right]=\left[x,\psi(y)\right]
\end{equation} for every $x,y\in L$. Thus there exist two matrices $P,Q\in \operatorname{M}_n(\mathbb{C})$ associated with, respectively, $\varphi$ and $\psi$ respect to the basis $\mathcal{B}=\mathcal{B}_1\cup\cdots\cup\mathcal{B}_t$ of $L$. With this assumptions Equations (\ref{eqthm1}) hold if and only if $P'\gamma_{ij}=\gamma_{ij}Q$, for any $i=1,\ldots,t$ and $j=1,\ldots,n_i$.
The matrices $P=(P_{ij})_{i,j=1,\ldots,t}$ e $Q=(Q_{ij})_{i,j=1,\ldots,t}$ are block matrices, where $P_{ij},Q_{ij}\in \operatorname{M}_{n_i\times n_j}(\mathbb{C})$. The condition $P'\gamma_{ij}=\gamma_{ij}Q$ implies that $P'_{ik}C_{i_j}=0=C_{i_j}Q_{ik}$, for every $k\in\left\{1,\ldots,t\right\}\setminus\left\{i\right\}$, because
\[
P'\gamma_{ij}=\begin{pmatrix}
	0 & \cdots & 0 & P'_{i1}C_{i_j} & 0 & \cdots & 0 \\ 
	0 & \cdots & 0 & P'_{i2}C_{i_j} & 0 & \cdots & 0 \\ 
	\vdots & \ddots & \vdots & \vdots & \vdots & \ddots & 0 \\ 
	0 & \cdots & 0 & P'_{it}C_{i_j} & 0 & \cdots & 0 \\
\end{pmatrix}
\] and
\[
\gamma_{ij}Q=\begin{pmatrix}
	0 & 0 & \cdots  & 0 \\
	\vdots & \vdots & \ddots & \vdots \\
	0 & 0 & \cdots  & 0 \\
	C_{i_j}Q_{i1} & C_{i_j}Q_{i2} & \cdots & C_{i_j}Q_{it} \\
	0 & 0 & \cdots  & 0 \\
	\vdots & \vdots & \ddots & \vdots \\
	0 & 0 & \cdots  & 0 \\
\end{pmatrix}.
\] 
Then, for every $x\in L_k, y\in L_i$, we have
\begin{gather*}
	x'P'_{ik}C_{i_j}y=0\Rightarrow (P_{ik}x)'C_{i_j}y=0,\,\,\forall j=1,\ldots, n_i \Rightarrow \\
	\Rightarrow\left((P_{ik}x)'C_{i_1}y,\ldots, (P_{ik}x)'C_{i_{n_i}}y \right)=(0,\ldots,0)\cong0_{L_i}.
\end{gather*}
The matrix $P_{ik}\in \operatorname{M}_{n_i\times n_k}(\mathbb{C})$ is the matrix associated to a linear map $f\colon L_k\rightarrow L_i$ respect to the basis $\mathcal{B}_k$ of $L_k$ and $\mathcal{B}_i$ of $L_i$. Thus $\left[f(x),y\right]_{L_i}=0_{L_i}$, i.e.\ $f(x)\in \operatorname{Z}(L_i)$. The centers $\operatorname{Z}(L_i)=\left\{0_{L_i}\right\}$ since $L_i$ is a simple Lie algebra, for every $i=1,\ldots,t$, and then $P_{ik}=0\in \operatorname{M}_{n_i\times n_k}(\mathbb{C})$. By similar computation we obtain $Q_{ik}=0\in \operatorname{M}_{n_i\times n_k}(\mathbb{C})$. Then $P$ and $Q$ are respectively the diagonal block matrices
\[\begin{pmatrix}
	P_{11} & 0 & \cdots & 0 \\
	0 & P_{22} &  & 0 \\
	\vdots &  & \ddots & \vdots \\
	0 & 0 & \cdots & P_{tt} 
\end{pmatrix}\quad\text{ and }\quad
\begin{pmatrix}
	Q_{11} & 0 & \cdots & 0 \\
	0 & Q_{22} &  & 0 \\
	\vdots &  & \ddots & \vdots \\
	0 & 0 & \cdots & Q_{tt} 
\end{pmatrix}.\]
The linear maps $\varphi_i$ and $\psi_i$, whose associated matrices are respectively $P_{ii}$ and $Q_{ii}$ respect to the basis $\mathcal{B}_i$ of $L_i$, map every element $x_i\in L_i$ in $L_i$, for any $i=1,\ldots,t$. Finally, for every $x=x_1+\cdots+x_t, y=y_1+\cdots+y_t\in L=L_1\oplus \cdots \oplus L_t$ we have
\begin{gather*}
	B(x,y)=B(x_1+\cdots+x_t, y_1+\cdots+y_t)=\left[\varphi_1(x_1),y_1\right]_{L_1}+\cdots+\left[\varphi_t(x_t),y_t\right]_{L_t}=\\
	=\left[x_1,\psi_1(y_1)\right]_{L_1}+\cdots+\left[x_t,\psi_t(y_t)\right]_{L_t}.
\end{gather*}
By Proposition \ref{propbiderss} and since $L_i$ is a simple Lie algebra, exist $\lambda_i,\mu_i\in\mathbb{C}$ such that $\left[\varphi_i(x_i),y_i\right]_{L_i}=\lambda_i\left[x_i,y_i\right]_{L_i}$ and $\left[x_i,\psi_i(y_i)\right]_{L_i}=\mu_i\left[x_i,y_i\right]_{L_i}$, for every $i=1,\ldots,t$. We can conclude the proof by Lemma \ref{lembiderss} and say that $\lambda_i=\mu_i$,  for every $i=1,\ldots,t$.  \endproof

In the last part of this section we want to show how to decompose the vector space $\operatorname{BiDer(L)}$ in a direct sum of its subspaces, when $L$ is a complete Lie algebra. In other words, we will show that is always possible to decompose the vector space $\operatorname{BiDer(L)}$ of all biderivations of $L$ in a direct sum of two simplier vector spaces. The space $\operatorname{BiDer(L)}$ is isomorphic to the vector space
\[\{\varphi\in\operatorname{End(L)}\,|\,\exists\psi\in\operatorname{End(L)} \text{ such that } \left[\varphi(x),y\right]=\left[x,\psi(y)\right], \forall x,y\in L\}\]
that is isomorphic to
\[V=\{ P\in \operatorname{M}_n(\mathbb{C})\,|\,\exists Q\in \operatorname{M}_n(\mathbb{C})\text{ such that } PA_i=A_iQ, \forall i=1,\ldots, n \},\]
where $\dim_\mathbb{C}L=n$ and $(A_1,\ldots,A_n)$ are the structure matrices of $L$.

\begin{thm}\label{thmdec}
	Let $L$ be a complete Lie algebra over the complex field $\mathbb{C}$, with $\dim_\mathbb{C}L=n$, $(A_1,\ldots,A_n)$ the structure matrices of $L$ respect to a fixed basis $\mathcal{B}$ of $L$ and let $V$ be the vector space defined above. Then $V=V^+\oplus V^-$, where
	\begin{gather*}
		V^+= \left\{ P\in \operatorname{M}_n(\mathbb{C})\,|\, (PA_i)'=(PA_i)\right\}\\
		V^-= \left\{ P\in \operatorname{M}_n(\mathbb{C})\,|\, (PA_i)'=-(PA_i)\right\}
	\end{gather*}
\end{thm}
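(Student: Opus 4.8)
The plan is to establish a direct sum by checking three things: that $V^+$ and $V^-$ are subspaces contained in $V$, that $V^+ \cap V^- = \{0\}$, and that $V^+ + V^- = V$. Everything rests on one structural observation: each structure matrix is skew-symmetric, i.e.\ $A_i' = -A_i$. This is immediate, since the $(l,m)$ entry of $A_i$ is the structure constant $c_{lm}^i$ and the skew-symmetry $[e_l,e_m]=-[e_m,e_l]$ of the bracket gives $c_{lm}^i=-c_{ml}^i$.

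First I would check the inclusions $V^+,V^-\subseteq V$. For $P\in V^+$ the relation $(PA_i)'=PA_i$ becomes $A_i'P'=PA_i$, and substituting $A_i'=-A_i$ rewrites it as $PA_i=A_i(-P')$; hence $Q=-P'$ exhibits $P\in V$. Symmetrically, $P\in V^-$ gives $PA_i=A_iP'$, so $Q=P'$ works. Both are evidently linear subspaces, being cut out by linear equations.

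The step I expect to be the main obstacle is the spanning identity $V=V^++V^-$. The naive idea of splitting each $PA_i$ into its symmetric and skew-symmetric parts does not by itself produce a decomposition of $V$, because one needs \emph{single} matrices $P^+,P^-$ that yield those parts simultaneously for every $i$. The way around this is to exploit the auxiliary matrix $Q$ attached to $P\in V$: from $PA_i=A_iQ$ and $A_i'=-A_i$ one gets $(PA_i)'=Q'A_i'=-Q'A_i$, so the symmetric part of $PA_i$ equals $\tfrac12(P-Q')A_i$ and the skew part equals $\tfrac12(P+Q')A_i$. Setting $P^+=\tfrac12(P-Q')$ and $P^-=\tfrac12(P+Q')$ then gives $P^+\in V^+$, $P^-\in V^-$ and $P=P^++P^-$.

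Finally I would prove $V^+\cap V^-=\{0\}$, which is where completeness is essential. If $P$ lies in both spaces then $PA_i=(PA_i)'=-PA_i$, forcing $PA_i=0$ for all $i$. Under the identification of $V$ with $\operatorname{BiDer(L)}$, this says the associated biderivation $B(x,y)=[\varphi(x),y]$ vanishes, so $\varphi(x)\in\operatorname{Z}(L)$ for every $x$; since $\operatorname{Z}(L)=0$ we conclude $\varphi=0$, i.e.\ $P=0$. Together with the previous paragraph this yields $V=V^+\oplus V^-$.
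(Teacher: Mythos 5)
Your proof is correct and follows essentially the same route as the paper: the same key fact $A_i'=-A_i$, the same witnesses $Q=\mp P'$ for the inclusions $V^{\pm}\subseteq V$, and the same splitting $P=\tfrac{1}{2}\left(P+Q'\right)+\tfrac{1}{2}\left(P-Q'\right)$ with $P-Q'\in V^+$ and $P+Q'\in V^-$. Your extra verification that $V^+\cap V^-=\left\{0\right\}$ via centerlessness is a point the paper's proof leaves implicit, so it completes rather than diverges from the published argument.
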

\proof Remind that all matrices $A_i$ are skew-symmetric. Then $V^+\oplus V^-\subseteq V$ because, for all $P_+\in V^+, P_-\in V^-$ and $A_i\in\left\{A_1,\ldots,A_n\right\}$, we have 
\[
P_+A_i=(P_+A_i)'=A_i'P_+'=-A_iP_+'=A_i(-P_+'),
\] and this proves that $P_+\in V$. On the other hand,
\[
P_-A_i=-(P_-A_i)'=-A_i'(P_-)'=A_i(P_-'),
\]  
for every $A_i\in\left\{A_1,\ldots,A_n\right\}$, and this proves that $P_-\in V$.  Now, to prove that $V\subseteq V^+\oplus V^-$ we consider $P\in V$ and $Q\in \operatorname{M}_n\mathbb(C)$ such that $PA_i=A_iQ$. Then we have
\begin{gather*}
	\left(\left(P+Q'\right)A_i\right)'=\left(PA_i+Q'A_i\right)'=\left(A_iQ+Q'A_i\right)'=\\
	=-Q'A_i-A_iQ=-PA_i-Q'A_i=-\left(P+Q'\right)A_i
\end{gather*} and
\begin{gather*}
	\left(\left(P-Q'\right)A_i\right)'=\left(PA_i-Q'A_i\right)'=\left(A_iQ-Q'A_i\right)'=\\
	=-Q'A_i+A_iQ=\left(-Q'A_i+PA_i\right)=\left(P-Q'\right)A_i,
\end{gather*}
for every $A_i\in\left\{A_1,\ldots,A_n\right\}$. Then we showed that $P+Q'\in V^-$ and $P-Q'\in V^+$.
Thus the matrix $P=\frac{1}{2}\left(P+Q'\right)+\frac{1}{2}(P-Q')$ belongs to $V^+\oplus V^-$ and this proves the second inclusion.\endproof

By Theorem \ref{thmbiderss} it follows that the matrix $P$ associated to the endomorphism $\varphi$ of the semisimple Lie algebra $L$ (that is equal to Q, the matrix associated to $\psi$) is direct sum of scalar matrices $\lambda_i I_{n_i}$, for $i=1,\ldots,t$, then $P$ is a scalar matrix and  $(PA_i)'=A_i'P'=-A_iP'=-A_iP$, for all structure matrices $A_i\in\left\{A_1,\ldots,A_n\right\}$. Thus the subspace $V^+$ is empty and if $L=L_1\oplus \cdots \oplus L_t$, where $L_i$ is a complex simple Lie algebra with $\dim_\mathbb{C} L_i=n_i$ for every $i=1,\ldots, t$ and $n_1+\cdots+n_t=n$, $V^-$ is isomorphic to $\mathbb{C}^t$. We summarize this facts in the following proposition.

\begin{prop}
	Let $L$ be a semisimple Lie algebra over the complex field $\mathbb{C}$ like above, with $\dim_\mathbb{C}L=n$, $(A_1,\ldots,A_n)$ the structure matrices of $L$ respect to a fixed basis $\mathcal{B}$ of $L$ and $V=V^+\oplus V^-$ the vector space defined in Theorem \ref{thmdec}. Then $\operatorname{BiDer(L)}\cong V^-\cong\mathbb{C}^t$.
\end{prop}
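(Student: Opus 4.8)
The plan is to assemble the desired isomorphism from three ingredients already in hand: the general identification $\operatorname{BiDer(L)}\cong V$ valid for every complete Lie algebra, the explicit description of $V$ in the semisimple case furnished by Theorem \ref{thmbiderss}, and the decomposition $V=V^+\oplus V^-$ of Theorem \ref{thmdec}. The heart of the matter is to show that for semisimple $L$ the summand $V^+$ collapses to $\{0\}$, so that $V=V^-$, and then to count the free parameters.

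First I would recall that a semisimple Lie algebra is complete, so Proposition \ref{propbiderss} applies and the isomorphism $\operatorname{BiDer(L)}\cong V$ holds, where $V$ is the space of matrices $P$ admitting some $Q$ with $PA_i=A_iQ$ for all $i$. By Theorem \ref{thmbiderss}, with $L=L_1\oplus\cdots\oplus L_t$, every biderivation has the form $B(x,y)=\sum_{i=1}^{t}\lambda_i[x_i,y_i]$, the associated endomorphisms $\varphi,\psi$ coincide, and their common matrix is the block-scalar matrix $P=Q=\bigoplus_{i=1}^{t}\lambda_i I_{n_i}$. The assignment $B\mapsto(\lambda_1,\ldots,\lambda_t)$ is manifestly linear and bijective, yielding $\operatorname{BiDer(L)}\cong V\cong\mathbb{C}^t$.

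Next I would verify the inclusion $V\subseteq V^-$. Fix $P=\bigoplus_i\lambda_i I_{n_i}\in V$. Each structure matrix $A_j$ of $L$ (the $\gamma_{ij}$ of Theorem \ref{thmbiderss}) is block-diagonal and supported in a single diagonal block, because brackets of elements from distinct simple components vanish; since $P$ acts as a scalar on each block, it commutes with every $A_j$. Moreover $P$ is diagonal, hence symmetric, while each $A_j$ is skew-symmetric. Therefore $(PA_j)'=A_j'P'=-A_jP=-PA_j=-(PA_j)$, so $PA_j$ is skew-symmetric for all $j$, i.e.\ $P\in V^-$. Combined with the decomposition $V=V^+\oplus V^-$ of Theorem \ref{thmdec}, the inclusion $V\subseteq V^-$ forces $V^+=\{0\}$ and $V=V^-$.

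Finally, chaining the isomorphisms gives $\operatorname{BiDer(L)}\cong V=V^-$, and since $V\cong\mathbb{C}^t$ we conclude $V^-\cong\mathbb{C}^t$. The only genuinely nonroutine step is the commutation $PA_j=A_jP$: it rests entirely on the block-scalar shape of $P$ produced by Theorem \ref{thmbiderss}, together with the fact that each structure matrix of a direct sum lives in a single simple block. Everything else is bookkeeping with transposes and with the directness of the sum $V^+\oplus V^-$.
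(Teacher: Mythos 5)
Your proof is correct and follows essentially the same route as the paper: identify $\operatorname{BiDer(L)}$ with $V$, use Theorem \ref{thmbiderss} to see that every $P\in V$ is block-scalar, and then use skew-symmetry of the structure matrices to place $P$ in $V^-$, forcing $V^+=\{0\}$ and $V=V^-\cong\mathbb{C}^t$. In fact you are slightly more careful than the paper, which calls $P$ ``a scalar matrix'' and leaves the commutation $PA_j=A_jP$ implicit, whereas you justify it correctly from the block-scalar shape of $P$ and the fact that each structure matrix is supported in a single simple block.
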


\section{Symmetric and skew-symmetric biderivations of complete Lie algebras}

There are several examples of papers in which biderivations (more precisely skew-symmetric biderivations) are determined in terms of linear commuting maps (see \cite{bresarzhao2018},\cite{chen2016},\cite{chengwangsunzhang2017},\cite{hanwangxia2016}, \cite{wangyu2013}). In this section we show how simply are symmetric and skew-symmetric biderivations of a complete Lie algebra.\\
A biderivation $B\colon L\times L\rightarrow L$ is called \emph{symmetric} (resp. skew-symmetric) if $B(x,y)=B(y,x)$ (resp. $B(x,y)=-B(y,x)$), for all $x,y\in L$. If $B$ is a symmetric biderivation the bilinear forms $\beta_1,\beta_2,\ldots,\beta_n$ associated with $B$ are symmetric and then the matrices $B_1,B_2,\ldots,B_n$ are symmetric. Obviously, the same reasoning applies to a skew-symmetric biderivation. It is equivalent to say that $\varphi$ maps symmetric (resp. skew-symmetric) biderivations into $n-$tuples of symmetric (resp. skew-symmetric) matrices. 
In general, every biderivation $B\colon L\times L\rightarrow L$ of a Lie algebra $L$ can be written as $B=\frac{1}{2}B^++\frac{1}{2}B^-$, where $B^+$ and $B^-$ are respectively the bilinear maps from $L\times L$ to $L$ defined as
\[
B^+\colon (x,y)\mapsto B(x,y)+B(y,x)\,\,\text{  and  }\,\, B^-\colon (x,y)\mapsto B(x,y)-B(y,x),
\]
for all $x,y\in L$. We note that, since $\operatorname{BiDer(L)}$ is a vector space, $B^+$ and $B^-$ are biderivations of $L$. With this assumptions we prove the following result.
\begin{prop}
	Let $B$ be a biderivations of a complete Lie algebra $L$, with
	\begin{gather*}
		B(x,y)=\left[\varphi(x),y\right]=\left[x,\psi(y)\right]
	\end{gather*}
	for some $\varphi,\psi\in\operatorname{End(L)}$ and for all $x,y\in L$. 
	\begin{itemize}
		\item If $B$ is symmetric, then $\varphi=-\psi$.\\
		\item If $B$ is skew-symmetric, then $\varphi=\psi$.
	\end{itemize}
\end{prop}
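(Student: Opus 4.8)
The plan is to exploit the two representations of $B$ simultaneously, together with the skew-symmetry of the Lie bracket and the fact that completeness forces $\operatorname{Z}(L)=0$. The crucial device is to evaluate $B(x,y)$ through $\varphi$ and $B(y,x)$ through $\psi$: this asymmetric choice of representation makes the (skew-)symmetry hypothesis collapse into a single bracket condition, after which completeness finishes the argument at once.

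For the symmetric case I would write $B(x,y)=\left[\varphi(x),y\right]$ and, using the second representation in the swapped arguments, $B(y,x)=\left[y,\psi(x)\right]$. The symmetry hypothesis $B(x,y)=B(y,x)$ then gives $\left[\varphi(x),y\right]=\left[y,\psi(x)\right]=-\left[\psi(x),y\right]$, whence $\left[\varphi(x)+\psi(x),y\right]=0$ for every $y\in L$. This says $\varphi(x)+\psi(x)\in\operatorname{Z}(L)$ for all $x$; since $L$ is complete, $\operatorname{Z}(L)=0$, so $\varphi(x)+\psi(x)=0$ for every $x$, i.e.\ $\varphi=-\psi$. For the skew-symmetric case the identical pairing yields $\left[\varphi(x),y\right]=-\left[y,\psi(x)\right]=\left[\psi(x),y\right]$, hence $\left[\varphi(x)-\psi(x),y\right]=0$ for all $y$, and completeness again forces $\varphi(x)-\psi(x)\in\operatorname{Z}(L)=0$, giving $\varphi=\psi$.

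I expect no genuine obstacle here: the argument is essentially the same centre-triviality trick already used in the proof of Proposition \ref{propbiderss} to establish linearity of $\varphi$ and $\psi$. The only point demanding a little care is the consistent choice of representation — taking the $\varphi$-form in the first slot and the $\psi$-form in the second — so that the skew-symmetry of the bracket lets the two terms merge into one bracket with a single argument; choosing both representations the same way would not produce the clean cancellation. Once that pairing is fixed, completeness supplies $\operatorname{Z}(L)=0$ exactly at the last step, and both items follow immediately.
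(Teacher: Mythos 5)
Your proof is correct and is essentially identical to the paper's: both evaluate $B$ one way via $\varphi$ and the swapped arguments via $\psi$, use skew-symmetry of the bracket to merge the two terms into a single bracket, and then invoke $\operatorname{Z}(L)=0$ from completeness. The only difference is a harmless relabeling of the variables $x$ and $y$.
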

\begin{proof}{\ }\\
	\begin{itemize}
		\item Since $B$ is a symmetric biderivation of $L$ we have $B(x,y)=B(y,x)$, for every $x,y\in L$. Then
		$B(y,x)=\left[\varphi(y),x\right]$ and $B(x,y)=\left[\varphi(x),y\right]=\left[x,\psi(y)\right])=-\left[\psi(y),x\right]$. By comparing these two results we
		obtain $\left[\varphi(y)+\psi(y),x\right]=0$ and this implies that $\varphi(y)+\psi(y)\in \operatorname{Z}(L)$. The Lie algebra L is centerless because
		it is complete and this allows us to conclude that $\varphi=-\psi$.
		\item  If we start from the equality $B(y,x)=-B(x,y)$, with the same arguments and similar computations we obtain $\varphi=\psi$.
	\end{itemize}
\end{proof}
We recall here that a linear map $f:L\rightarrow L$ is called \emph{commuting} if $\left[x,f(x)\right]=0$, for every $x\in L$. If the characteristic $char(\F)\neq2$, then a linear map $f\colon L\rightarrow L$ is commuting if and only if $\left[f(x),y\right]=\left[x,f(y)\right]$ for all $x,y\in L$. In a similar way $f$ is called \emph{skew-commuting} if and only if $\left[f(x),y\right]=-\left[x,f(y)\right]$ for all $x,y\in L$. To be
more precise the definition of commuting linear maps can be given for a wider class of algebraic structures, for example rings. After these definitions and the last proposition, it is fairly straightforward to prove the following results in which
are described symmetric and skew-symmetric biderivations of a complete Lie algebra $L$.
\begin{corollary}
	Let $L$ be a complete Lie algebra. $B\colon L\rightarrow L$ is a symmetric biderivation of $L$ if and only if there exists
	a unique skew-commuting linear map $\varphi\in\operatorname{End(L)}$ such that $B(x,y)=\left[\varphi(x),y\right]$, for any $x,y\in L$.
\end{corollary}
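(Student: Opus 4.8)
The plan is to read off both implications directly from Proposition \ref{propbiderss} together with the immediately preceding proposition, and to handle uniqueness separately using the centerless hypothesis.

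First I would treat the forward direction. Starting from a symmetric biderivation $B$, Proposition \ref{propbiderss} supplies $\varphi,\psi\in\operatorname{End}(L)$ with $B(x,y)=\left[\varphi(x),y\right]=\left[x,\psi(y)\right]$, and the symmetric case of the preceding proposition forces $\psi=-\varphi$. Substituting then gives $\left[\varphi(x),y\right]=\left[x,\psi(y)\right]=-\left[x,\varphi(y)\right]$ for all $x,y\in L$, which is precisely the skew-commuting identity recalled just before the statement. Hence the very $\varphi$ provided by Proposition \ref{propbiderss} is skew-commuting and satisfies $B(x,y)=\left[\varphi(x),y\right]$.

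For the converse I would start with a skew-commuting $\varphi$ and set $B(x,y):=\left[\varphi(x),y\right]$. Putting $\psi:=-\varphi$, the skew-commuting identity gives $\left[\varphi(x),y\right]=-\left[x,\varphi(y)\right]=\left[x,\psi(y)\right]$, so Proposition \ref{propbiderss} guarantees that $B$ is a biderivation. Its symmetry then follows by combining the antisymmetry of the bracket with the skew-commuting property: $B(y,x)=\left[\varphi(y),x\right]=-\left[x,\varphi(y)\right]=\left[\varphi(x),y\right]=B(x,y)$.

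Finally, for uniqueness I would suppose $\left[\varphi_1(x),y\right]=B(x,y)=\left[\varphi_2(x),y\right]$ for all $x,y$; then $\left[\varphi_1(x)-\varphi_2(x),y\right]=0$ for every $y$, so $\varphi_1(x)-\varphi_2(x)\in\operatorname{Z}(L)$, and completeness of $L$ (which gives $\operatorname{Z}(L)=0$) forces $\varphi_1=\varphi_2$. I do not expect a genuine obstacle here; the only point deserving care is recognizing that the relation $\psi=-\varphi$ from the previous proposition is exactly the skew-commuting identity, after which everything reduces to a direct substitution.
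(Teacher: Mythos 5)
Your proposal is correct and follows exactly the route the paper intends: the paper leaves this corollary unproved as ``fairly straightforward'' from Proposition \ref{propbiderss} and the preceding proposition on symmetric biderivations (which gives $\psi=-\varphi$), and your argument fills in precisely those steps, including the uniqueness via $\operatorname{Z}(L)=0$. No gaps; your observation that having both representations $B(x,y)=\left[\varphi(x),y\right]=\left[x,\psi(y)\right]$ is what licenses the appeal to Proposition \ref{propbiderss} in the converse direction is exactly the right point of care.
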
 
\begin{corollary}
	Let $L$ be a complete Lie algebra. $B\colon L\rightarrow L$ is a skew-symmetric biderivation of $L$ if and only if there exists
	a unique commuting linear map $\varphi\in\operatorname{End(L)}$ such that $B(x,y)=\left[\varphi(x),y\right]$, for any $x,y\in L$.
\end{corollary}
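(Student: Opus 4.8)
The plan is to derive both implications directly from the structural results already in place, so that essentially no fresh computation is required; the corollary is really just a translation of the equality $\varphi=\psi$ (from the preceding proposition) into the language of commuting maps. Throughout I would assume $\operatorname{char}(\F)\neq 2$, which is exactly the hypothesis needed to move between the two formulations of the commuting condition recalled just above the statement.

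First I would handle the forward direction. Suppose $B$ is a skew-symmetric biderivation of the complete Lie algebra $L$. By Proposition \ref{propbiderss} there exist $\varphi,\psi\in\operatorname{End(L)}$ with $B(x,y)=[\varphi(x),y]=[x,\psi(y)]$ for all $x,y\in L$. The preceding proposition, in its skew-symmetric case, gives $\varphi=\psi$, so that
\[
B(x,y)=[\varphi(x),y]=[x,\varphi(y)]\qquad\text{for all }x,y\in L.
\]
In particular $[\varphi(x),y]=[x,\varphi(y)]$ for all $x,y$, which (since $\operatorname{char}(\F)\neq 2$) is precisely the assertion that $\varphi$ is commuting. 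For uniqueness I would use the centerless argument already exploited in Proposition \ref{propbiderss} and Lemma \ref{lembiderss}: if $\varphi,\varphi'$ both satisfy $B(x,y)=[\varphi(x),y]=[\varphi'(x),y]$, then $[\varphi(x)-\varphi'(x),y]=0$ for every $y$, so $\varphi(x)-\varphi'(x)\in\operatorname{Z}(L)=0$ and hence $\varphi=\varphi'$.

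For the converse, suppose a commuting $\varphi\in\operatorname{End(L)}$ is given with $B(x,y)=[\varphi(x),y]$. The commuting identity yields $[\varphi(x),y]=[x,\varphi(y)]$, so $B(x,y)=[\varphi(x),y]=[x,\varphi(y)]$ has exactly the shape demanded by Proposition \ref{propbiderss} (with $\psi=\varphi$), whence $B$ is a biderivation. Skew-symmetry then falls out of the same identity:
\[
B(y,x)=[\varphi(y),x]=[y,\varphi(x)]=-[\varphi(x),y]=-B(x,y).
\]

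I do not expect a genuine obstacle: the substantive content has been front-loaded into Proposition \ref{propbiderss} and the proposition immediately preceding the corollary, and what remains is a routine passage between $\varphi=\psi$ and the commuting condition. The only point that genuinely demands attention is the characteristic hypothesis $\operatorname{char}(\F)\neq 2$, which is needed to invoke the equivalence between $[\varphi(x),y]=[x,\varphi(y)]$ and the commuting identity $[x,\varphi(x)]=0$ in \emph{both} directions; since this equivalence is used essentially, it would be prudent to record the assumption explicitly in the statement.
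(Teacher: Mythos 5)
Your proof is correct and follows exactly the route the paper intends: the paper leaves this corollary unproved ("fairly straightforward"), expecting precisely the combination of Proposition \ref{propbiderss}, the skew-symmetric case of the preceding proposition ($\varphi=\psi$), the centerless-uniqueness argument, and the stated equivalence between the commuting condition and $[\varphi(x),y]=[x,\varphi(y)]$. Your remark that $\operatorname{char}(\F)\neq 2$ should be made explicit is a fair observation, consistent with the paper's own hypothesis when it recalls that equivalence.
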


\section*{Declarations} 
Not applicable. There is no Competing Interest.

\printbibliography


	
	
	

\end{document}